\newcommand{\p}{\partial}
\newcommand{\be}{\begin{equation}}
\newcommand{\ee}{\end{equation}}
\newcommand{\ba}{\begin{array}}
\newcommand{\ea}{\end{array}}
\newcommand{\bea}{\begin{eqnarray}}
\newcommand{\eea}{\end{eqnarray}}
\newcommand{\beas}{\begin{eqnarray*}}
\newcommand{\eeas}{\end{eqnarray*}}
\newtheorem{remark}{Remark}[section]
\def\R{{\mathbb R}}% real numbers
\def\S{{\mathbb S}}% Sphere
\newcommand{\cT}{\mathcal T}
\newcommand{\cP}{\mathcal P}
\newcommand{\cE}{\mathcal E}
\newcommand\cN{{\mathcal N}}
\newcommand{\eps}{\varepsilon}
\renewcommand{\l}{\left}
\renewcommand{\r}{\right}
\renewcommand{\d}{\mathrm{d}}
\title{A convexity-preserving and perimeter-decreasing parametric finite element method for the area-preserving curve shortening flow}
\author{Wei Jiang\thanks{School of Mathematics and Statistics,
Wuhan University, Wuhan, 430072, China ({\tt jiangwei1007@whu.edu.cn}). This author's research was supported by the National Natural Science Foundation of China Nos. 12271414 and 11871384.}
    \and
Chunmei Su\thanks{Yau Mathematical Sciences Center, Tsinghua University, Beijing, 100084, China ({\tt sucm@tsinghua.edu.cn}).}
\and
Ganghui Zhang\thanks{Yau Mathematical Sciences Center, Tsinghua University, Beijing, 100084, China({\tt gh-zhang19@mails.tsinghua.edu.cn}).}}
\date{}
\begin{document}

\maketitle

\begin{abstract}
We propose and analyze a semi-discrete parametric finite element scheme for solving the area-preserving curve shortening flow. The scheme is based on Dziuk's approach (SIAM J. Numer. Anal. 36(6): 1808-1830, 1999) for the anisotropic curve shortening flow. We prove that the scheme preserves two fundamental geometric structures of the flow with an initially convex curve: (i) the convexity-preserving property, and (ii) the perimeter-decreasing property. To the best of our knowledge, the convexity-preserving property of numerical schemes which approximate the flow is rigorously proved for the first time. Furthermore, the error estimate of the semi-discrete scheme is established, and numerical results are provided to demonstrate the structure-preserving properties as well as the accuracy of the scheme.
\end{abstract}

\begin{keywords}
area-preserving curve shortening flow, parametric finite element method, error estimate, convexity-preserving, perimeter-decreasing
\end{keywords}
\begin{AMS}
65M60, 65M12, 53C44, 35K55
\end{AMS}

\pagestyle{myheadings}\thispagestyle{plain}
\section{Introduction}
Consider the volume-preserving mean curvature flow driven by the motion law
\be\label{vp}
v=(H-\l<H\r>)\cN, \quad \mathrm{on}\quad \Gamma_t,\ee
where $\Gamma_t$ is a family of smooth hypersurfaces in $\mathbb{R}^n$, $v$ denotes the velocity, $\cN$ is the inner normal vector, $H$ represents the scalar mean curvature (with the sign convention that $H$ is positive for balls), and $\l<H\r>:={\int_{\Gamma_t} H ds^{n-1}}/{\int_{\Gamma_t}\,ds^{n-1}}$ is the average mean curvature along $\Gamma_t$. It is well-known that the volume-preserving mean-curvature flow can be interpreted as the $L^2$-gradient flow of the area functional under configurations with a fixed volume \cite{Mugnai}. The volume-preserving mean curvature flow has the following fundamental geometric properties, i.e.,
\begin{itemize}
\item [(i)] \emph{Volume-preserving} blue\cite[Lemma 5.25]{Andrews-Chow-Guenther-Langford}. It can be immediately verified that the volume enclosed by $\Gamma_t$ is indeed preserved by noticing
\[\frac{\d}{\d t}|\Omega_t|=-\int_{\Gamma_t} v\cdot \cN ds^{n-1}=-\int_{\Gamma_t} (H-\l<H\r>)ds^{n-1} =0,\]
where $\Omega_t$ is the region enclosed by $\Gamma_t$. In dimension two (i.e., $n=2$), it becomes the {\textbf{area-preserving}} property for a planar curve.

\item [(ii)] \emph{Area-shrinking} blue \cite[Lemma 5.25]{Andrews-Chow-Guenther-Langford}. Actually, one can easily check that
\[\frac{\d}{\d t}|\Gamma_t|=-\int_{\Gamma_t} H v\cdot \cN ds^{n-1}=-\int_{\Gamma_t}(H-\l<H\r>)^2ds^{n-1}\le 0.\]
When $n=2$, it becomes {\textbf{perimeter-decreasing}} for a planar curve.

\item [(iii)] \emph{Convexity-preserving}. When $n=2$, it was shown by Gage that starting from an initially, smooth and convex closed curve, this flow \eqref{vp} {\textbf{preserves the convexity}} and evolves the curve into a circle \cite{Gage85}. Furthermore, Huisken extended the result to higher dimensional cases \cite{Huisken87}. For more general initial data, interested readers may refer to \cite{Escher-Simonett,ATW}.
\end{itemize}

In this paper, we focus on the planar curve ($n=2$). In this case, the volume-preserving mean curvature flow is also known as
the area-preserving curve shortening flow (AP-CSF), and it can be parametrized by the following equation \cite{Gage85}
\begin{equation}\label{AP-CSF}
	\begin{cases}
		\p_tX=\l( H-\frac{2\pi}{L} \r)\cN,\quad &\xi\in \S^1,
\quad t\in (0,T],  \\
X(\xi,0)=X^0(\xi),\quad &\xi\in \S^1,
	\end{cases}
\end{equation}
where $X(\xi,t):\S^1\times [0,T]\rightarrow \Gamma_t\subseteq\R^2$, $L:=L(t)$ is the length of $\Gamma_t$, by recalling the theorem of turning tangents \cite{Carmo}, i.e., $\int_{\Gamma_t} Hds^1=2\pi$, for a simple closed curve $\Gamma_t$.

Nowadays, the AP-CSF has found important applications in many research areas, such as material science and image processing \cite{Italo-Stefano-Riccardo}, and it can be viewed as an area-preserving variant of the CSF \cite{Mayer,Taylor-Cahn} or  a limit flow of nonlocal
Ginzburg-Landau equation \cite{Bronsard-Stoth}. There have been extensive numerical investigations concerning with the CSF or AP-CSF in the last decades. Among them, parametric finite element methods (PFEMs) have been widely proposed for simulating the CSF and some other related geometric flows \cite{Bao-Garcke-Nurnberg-Zhao}, e.g., the surface diffusion flow \cite{Bao-Zhao}, and anisotropic geometric flows \cite{BGN2007_1,BGN2019,BGN2008_1,BGN2008_2}. Numerical approximations to the CSF by using PFEMs could date back to the pioneering work of Dziuk \cite{Dziuk1991} in 1991. Since then, various techniques have been introduced to make the designed PFEMs more
accurate and efficient in practical simulations, including the method of Barrett, Garcke and N\"urnberg (the BGN scheme) \cite{BGN2007_1, BGN2007_2} based on a novel variational formulation, the method
of Deckelnick and Dziuk by introducing an artificial tangential velocity
\cite{Deckelnick-Dziuk}, and the method proposed by Elliott and Fritz based on special
reparametrizations \cite{Elliott-Fritz}. These methods induce appropriate tangential motions that lead
to good mesh distribution property, which play a vital role in numerical simulations.
Recently, more and more attention has been paid to designing ``structure-preserving'' (e.g., area-preserving or perimeter-decreasing)
PFEMs for solving geometric evolution flows~\cite{Jiang21,Bao-Zhao,Bao-Garcke-Nurnberg-Zhao}.

However, error estimates for these schemes seem difficult and quite challenging. For example, Dziuk first studied the convergence of a semi-discrete linearly implicit PFEM for the CSF \cite{Dziuk1994} and anisotropic CSF \cite{Dziuk1999}, respectively, based on a finite difference structure; Li developed a new technique to analyze the convergence of semi-discrete high-order PFEMs for the CSF \cite{Li1} and mean curvature flow of closed surfaces \cite{Li2}, respectively. For Dziuk's fully discrete linearly implicit scheme~\cite{Dziuk1994}, until very recently, an optimal error estimate in $H^1$ has been established by Ye and Cui~\cite{Ye-Cui}. As for the error analysis about other numerical methods of the CSF or other related geometric flows, we refer to \cite{Deckelnick-Dziuk,Hu-Li,Kovacs-Li-Lubich2019,Kovacs-Li-Lubich2021, Pozzi-Stinner,Elliott-Fritz,Barrett-Deckelnick-Styles}.

Back to the AP-CSF, there exist various numerical methods in the literature, e.g., the finite difference method \cite{Mayer}, the MBO method \cite{Ruuth-Wetton,Catherine-Selim-Jeffrey}, the crystalline algorithm \cite{Ushijima-Yazaki} and PFEMs \cite{BGN2020, Pei-Li}. Particularly, structure-preserving properties were investigated in \cite{Ushijima-Yazaki, BGN2020, Pei-Li, Sakakibara-Miyatake}. For example, the semi-discrete PFEM in \cite{BGN2020} based on an elegant variational formulation was shown to preserve the length shortening property, and the fully discrete crystalline algorithm in \cite{Ushijima-Yazaki}, the semi-discrete polygonal evolution law in \cite{Sakakibara-Miyatake} based on the definition of tangent and normal vectors/velocities at each vertex, the fully discrete PFEM in \cite{Pei-Li} was shown to be area-preserving and perimeter-decreasing. However, error estimates have been barely studied for the above mentioned methods, except for the crystalline algorithm in \cite{Ushijima-Yazaki} where the error estimate was only established for the curvature since the numerical scheme was designed based on the crystalline approximation which merely involves the curvature. To the best of our knowledge, there exists few numerical analysis about numerical methods for solving the AP-CSF, and the convexity-preserving property has never been investigated in the literature. The reason lies in that the nonlocal term in the AP-CSF has brought troubles and considerable challenges in numerical analysis.

In this paper, we propose a semi-discrete PFEM for the AP-CSF based on Dziuk's approach for anisotropic CSF \cite{Dziuk1999}, investigate its structure-preserving properties and present its error analysis. Specifically, we prove that our scheme preserves two important geometric structures of the AP-CSF, i.e., convexity-preserving and perimeter-decreasing properties. As far as we know, this is the first job to rigorously prove the convexity-preserving property and to give the error estimate of numerical methods for solving the AP-CSF.

To start, \eqref{AP-CSF} can be written more explicitly as
\begin{equation}\label{AP-CSF, parametrization}
		 			\p_t X=\frac{1}{|\p_\xi X|}\p_\xi\Big(\frac{\p_\xi X}{|\p_\xi X|} \Big)-\frac{2\pi}{L}\Big(\frac{\p_\xi X}{|\p_\xi X|}\Big)^{\perp},
		 \end{equation}
		where $(a,b)^\perp:=(-b,a)$. This naturally yields a weak formulation: for any $v\in (H^1(\S^1))^2$, it holds
		\begin{equation}\label{AP-CSF, weak}
	\begin{split}
			&\int_{\S^1}|\p_\xi X|\p_t X\cdot v\ \d \xi+ \int_{\S^1} \frac{\p_\xi X}{|\p_\xi X|} \cdot \p_\xi v\ \d \xi +\int_{\S^1}\frac{2\pi }{L} (\p_\xi X)^\perp\cdot v \  \d \xi=0.
		\end{split}
\end{equation}
As mentioned in \cite{Dziuk1994}, the derived linearly implicit PFEM from the above formulation for the CSF (with the last term missing) may fail to preserve the length shortening property of the CSF. To overcome this, Dziuk proposed another scheme based on the lumping of masses in \cite{Dziuk1994} for the CSF. Here we utilize the similar approach: find a solution $X_h(\xi,t)\in V_h\times[0,T]$ satisfying the weak form \eqref{Semidiscrete, weak}
%\begin{equation}\label{fapp}
%		\begin{split}
%			&\int_{\S^1}|\p_\xi X_h|\p_t X_h\cdot v_h\ \d \xi+ \int_{\S^1} \frac{\p_\xi X_h}{|\p_\xi X_h|} \cdot \p_\xi v_h\ \d \xi\\
%		&\ +\int_{\S^1}\frac{\mathbf{h^2}|\p_\xi X_h|}{6}\p_\xi\p_t X_h\cdot \p_\xi v_h\mathrm{~d} \xi +\int_{\S^1}\frac{2\pi }{L_h} (\p_\xi X_h)^\perp\cdot v_h \  \d \xi=0,\quad \forall\ v_h\in V_h,
%		\end{split}
%		\end{equation}
with initial condition $X_h(\xi,0)=I_hX^0$, where $V_h$ is a vector valued Lagrange finite element space consisting of piecewise linear polynomial and $I_h$ is the standard Lagrange interpolation. Similar as in \cite{Chow-Glickenstein}, the semi-discrete scheme focuses on the motion of the initial polygon, which is determined by the evolution of the vertices. We show that if the initial curve is convex, then the evolved polygon keeps convex all the time.  Moreover, the perimeter of the polygon  is decreasing. To show the convexity-preserving property, we characterize the convexity of a polygon  by the positivity of the oriented area of all adjacent triangles, which will be shown by a contradiction argument. Surprisingly, the perimeter-decreasing property can be reduced to a pure trigonometric inequality when the polygon keeps convex. We note that nondegeneration of vertices is necessary to ensure the evolved polygons are well-behaved. This will be guaranteed by the error estimate of the scheme, which shows that the semi-discrete scheme \eqref{Semidiscrete, weak} converges in $H^1$ at the first order, and the lower bound of the edge lengths of the polygon could keep positive all the time.
%The key points of the error analysis lies in the following basic properties:
%\begin{align*}
%&\p_t|\p_\xi X | = -|\p_\xi X||\p_t X |^2-\frac{2\pi}{L}|\p_\xi X|\mathcal{N}\cdot \p_t X,\\
%&\left|A-B\right|^{2}=|A||B| |A/|A|-B/|B||^2+(|A|-|B|)^2,\quad A, B\in \mathbb{R}^2,\quad A\neq 0, \quad B\neq 0,
%\end{align*}
%and the difference of the nonlocal terms--perimeters can be controlled by length element difference
%\begin{equation}\label{Estimate of perimeter}
% \l|L-L_h\r|=\l|\int_{\S^1}|\p_\xi X|-|\p_\xi X_h|\ \d \xi\r| \le C\l(\int_{\S^1}\l(|\p_\xi X|-|\p_\xi X_h|\r)^2\ \d \xi\r)^{1/2}.
%\end{equation}

The rest of the paper is organized as follows. In Section 2, we start with the spatial discretization which approximates the AP-CSF and summarize our main results. In Section 3, we prove that the numerical scheme rigorously preserves two important geometric structures of the flow, i.e., the convexity-preserving and perimeter-decreasing properties. Then, we present the proof of the error estimate of the scheme in Section 4. Finally, some numerical results produced by the scheme are provided in Section 5 to validate our theoretical results.

\smallbreak

\section{Spatial discretization and main results}
Let $0=\xi_0<\xi_1<\ldots<\xi_N=2\pi$ be a partition of $\S^1=[0,2\pi]$. We denote $h_j=\xi_j-\xi_{j-1}$ by the length of the interval $I_j:=[\xi_{j-1}, \xi_j]$ and $h=\max\limits_{j} h_j$. Throughout the paper, we use a periodic index, i.e., $f_{j}=f_{j\pm N}$ when involved. We assume that the partition and the exact solution are regular in the following senses, respectively:
	  \smallskip

\textbf{(Assumption 2.1)} There exist constants $C_p$ and $C_P$ such that
	\[
	\min_{j} h_j\ge C_p h,\quad | h_{j+1}-h_j|\le C_P h^2,\quad 1\le j\le N.
	\]

	\textbf{(Assumption 2.2)} Suppose that the unique solution of \eqref{AP-CSF} with an initial value $X^0\in H^{2}(\S^1)$ satisfies $X\in W^{1,\infty}\l([0,T],H^2(\S^1)\r)$, i.e., \[K(X):=\|X\|_{W^{1,\infty}\l([0,T],H^2(\S^1)\r)}<\infty.\]
	 We further assume that there exist constants  $0<\kappa_1<\kappa_2$  such that
	  \[\kappa_1\le \l|\p_\xi X(\xi,t)\r|\le \kappa_2,\quad \forall\ (\xi, t)\in \S^1\times [0, T].
	  \]
	
	  \smallskip

	We define the following finite element space consisting of piecewise linear functions satisfying periodic boundary conditions:
	\[
   V_h=\l\{v\in C^0(\S^1,\R^2): v|_{I_j}\in P_1(I_j),\quad 1\le j\le N,\quad v(\xi_0)=v(\xi_N)\r\},
	\]
where $P_1$ denotes all polynomials with degrees at most $1$. For any continuous function $v\in C^0(\S^1,\R^2)$, the linear interpolation  $I_hv\in V_h$ is uniquely determined through $I_hv(\xi_j)=v(\xi_j)$ for all $1\le j\le N$ and can be explicitly written as $I_hv(\xi)=\sum\limits_{j=1}^N v(\xi_j)\varphi_j(\xi)$, where $\varphi_j$ represents the standard Lagrange basis function satisfying  $\varphi_j(\xi_i)=\delta_{ij}$. We have  the following basic estimates from finite element theory.

\smallskip
\begin{lemma}[\cite{Brenner-Scott}] Under Assumption 2.1, there exists a constant $C$ depending on $C_p, C_P$ such that the following estimates hold:\\
\noindent (i) (Interpolation estimate). For any $Y\in H^2(\S^1)$, we have
		\begin{equation}\label{Interpolation estimate}
			\begin{split}
				&\|Y-I_h Y\|_{L^2}
				\le Ch^k\|Y\|_{H^k},\quad k=1,2;\quad  \|Y-I_h Y\|_{L^\infty}
				\le Ch^{1/2}\|Y\|_{H^1},\\
 &				\|\p_\xi\l(Y-I_hY\r)\|_{L^2}
				\le Ch\|Y\|_{H^2}, \quad
				\|\p_\xi I_hY\|_{L^2}
			   \le C\|Y\|_{H^1}.
			\end{split}
		\end{equation}
\noindent (ii) (Inverse estimate). For $v_h\in V_h$, we have
		\begin{equation}\label{Inverse estimate}
			\|v_h\|_{L^\infty}\le Ch^{-1/2}\|v_h\|_{L^2},\quad \|v_h\|_{H^1}\le Ch^{-1}\|v_h\|_{L^2}.
		\end{equation}
\end{lemma}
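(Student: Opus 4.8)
The statement collects several classical finite–element estimates on the one–dimensional periodic mesh, so the plan is the textbook scaling argument: establish everything locally on each interval $I_j$ by pulling back to the reference interval $\hat I=[0,1]$, and then assemble the global bounds by summing over $j$. Assumption 2.1 enters only to make the constant $C$ uniform: for the interpolation estimates in (i) the single inequality $h_j\le h$ suffices, because after rescaling the local constants are absolute; for the inverse estimates in (ii) one additionally uses $h_j\ge C_p h$ to control the dangerous negative powers of $h_j$. Since everything is $\R^2$–valued it is enough to treat a scalar component and sum the two, and the identification $\S^1\cong[0,2\pi]$ plays no role beyond the periodic indexing. (The second half of Assumption 2.1, $|h_{j+1}-h_j|\le C_P h^2$, is not needed here.)

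For part (i) I would proceed as follows. On $\hat I$ the error operator $\hat Y\mapsto \hat Y-\hat I_h\hat Y$ annihilates $P_1$ and is bounded from $H^2(\hat I)$ into $L^2(\hat I)$ and into $H^1(\hat I)$, and from $H^1(\hat I)$ into $L^2(\hat I)$ (the nodal values make sense since $H^1(\hat I)\hookrightarrow C^0(\hat I)$); the Bramble--Hilbert lemma then gives $\|\hat Y-\hat I_h\hat Y\|_{L^2(\hat I)}\le C|\hat Y|_{H^k(\hat I)}$ for $k=1,2$ and $\|\p_{\hat\xi}(\hat Y-\hat I_h\hat Y)\|_{L^2(\hat I)}\le C|\hat Y|_{H^2(\hat I)}$. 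Under the affine map $\xi=\xi_{j-1}+h_j\hat\xi$ one has $\|w\|_{L^2(I_j)}^2=h_j\|\hat w\|_{L^2(\hat I)}^2$ and $|Y|_{H^m(I_j)}^2=h_j^{1-2m}|\hat Y|_{H^m(\hat I)}^2$, and rescaling turns the reference bounds into $\|Y-I_hY\|_{L^2(I_j)}\le Ch_j^{k}|Y|_{H^k(I_j)}$ and $\|\p_\xi(Y-I_hY)\|_{L^2(I_j)}\le Ch_j|Y|_{H^2(I_j)}$; summing over $j$ with $h_j\le h$ and $|Y|_{H^k(I_j)}\le\|Y\|_{H^k(I_j)}$ gives the first and third inequalities of (i). For the $L^\infty$ bound I would not run a separate Bramble--Hilbert step but use that $Y-I_hY$ vanishes at the two endpoints of $I_j$, so
\[
\|Y-I_hY\|_{L^\infty(I_j)}\le \int_{I_j}|\p_\xi(Y-I_hY)|\,\d\xi\le h_j^{1/2}\,\|\p_\xi(Y-I_hY)\|_{L^2(I_j)};
\]
bounding the right-hand side by $h_j^{1/2}\bigl(|Y|_{H^1(I_j)}+\|\p_\xi I_hY\|_{L^2(I_j)}\bigr)$ and noting that $\p_\xi I_hY|_{I_j}$ is the constant $h_j^{-1}\int_{I_j}\p_\xi Y$, Cauchy--Schwarz yields $\|\p_\xi I_hY\|_{L^2(I_j)}\le|Y|_{H^1(I_j)}$. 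Summing this over $j$ proves the last inequality of (i), and taking the maximum over $j$ in the previous display proves the $L^\infty$ estimate with the factor $h^{1/2}$.

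For part (ii) I would compute directly with nodal values. If $v_h|_{I_j}$ is linear with $v_h(\xi_{j-1})=a$ and $v_h(\xi_j)=b$, then $\|v_h\|_{L^2(I_j)}^2=\tfrac{h_j}{3}\bigl(|a|^2+a\cdot b+|b|^2\bigr)\ge\tfrac{h_j}{6}\bigl(|a|^2+|b|^2\bigr)$; since $|v_h|^2$ is convex along $I_j$ its maximum is attained at an endpoint, so $\|v_h\|_{L^\infty(I_j)}^2=\max(|a|^2,|b|^2)\le\tfrac{6}{h_j}\|v_h\|_{L^2(I_j)}^2$. Likewise $\|\p_\xi v_h\|_{L^2(I_j)}^2=|b-a|^2/h_j\le\tfrac{2}{h_j}(|a|^2+|b|^2)\le\tfrac{12}{h_j^2}\|v_h\|_{L^2(I_j)}^2$. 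Invoking $h_j\ge C_p h$ and taking the maximum over $j$ in the first estimate gives $\|v_h\|_{L^\infty(\S^1)}\le Ch^{-1/2}\|v_h\|_{L^2}$, while summing the second over $j$ gives $\|\p_\xi v_h\|_{L^2}\le Ch^{-1}\|v_h\|_{L^2}$, hence $\|v_h\|_{H^1}\le Ch^{-1}\|v_h\|_{L^2}$. Since every ingredient is either an explicit one-dimensional integral or a routine application of Bramble--Hilbert plus affine scaling, there is no real obstacle; the only point needing care is the uniformity of $C$, which is exactly what the lower mesh bound $\min_j h_j\ge C_p h$ supplies. Alternatively one can simply quote \cite{Brenner-Scott} for the reference-element estimates and record the rescaling.
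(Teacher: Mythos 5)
Your proof is correct and complete; the paper itself offers no argument for this lemma, simply citing \cite{Brenner-Scott}, and what you have written is precisely the standard reference-element/scaling argument (plus the elementary nodal computations for the inverse estimates) that the citation stands in for. Your side remark that only the lower bound $\min_j h_j\ge C_p h$ is actually used, and not the quasi-uniformity condition $|h_{j+1}-h_j|\le C_P h^2$, is also accurate.
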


\begin{definition}
	We call a function
\be\label{Xh}
X_h(\xi,t)=\sum_{j=1}^NX_j(t)\varphi_j(\xi):\S^1\times [0,T]\rightarrow \R^2
 \ee
 is a semidiscrete solution of (\ref{AP-CSF, parametrization}) if it satisfies the following weak formulation
			\begin{equation}\label{Semidiscrete, weak}
		\begin{split}
			&\int_{\S^1}|\p_\xi X_h|\p_t X_h\cdot v_h\ \d \xi+ \int_{\S^1} \frac{\p_\xi X_h}{|\p_\xi X_h|} \cdot \p_\xi v_h\ \d \xi\\
		&\ +\int_{\S^1}\frac{\mathbf{h}^2|\p_\xi X_h|}{6}\p_\xi\p_t X_h\cdot \p_\xi v_h\mathrm{~d} \xi +\int_{\S^1}\frac{2\pi }{L_h} (\p_\xi X_h)^\perp\cdot v_h \  \d \xi=0,\quad \forall\ v_h\in V_h,
		\end{split}
		\end{equation}
		with initial condition $X_h(\xi,0)=I_hX^0$, where $L_h$ represents the perimeter of the evolved curve (image of $X_h$), i.e.,
		\[
		L_h:=\sum_{j=1}^N h_j\l.|\p_\xi X_h| \r|_{I_j}=\sum_{j=1}^N|X_j-X_{j-1}| =:\sum_{j=1}^N q_j,
		\]
		and  $\mathbf{h}$ is a piecewise constant $\mathbf{h}=h_j$ on $I_j$.
\end{definition}
\begin{remark}
 A similar version of \eqref{Semidiscrete, weak} was proposed and analyzed in \cite[(9) and (15)]{Dziuk1994} and \cite[Definition 4.1]{Dziuk1999} for CSF and anisotropic CSF, respectively. The introduction of the third term $\int_{\mathbb{S}^1}\frac{\mathbf{h}^2|\partial_\xi X_h|}{6}\partial_\xi\partial_t X_h\cdot \partial_\xi v_h \mathrm{d}\xi$ in \eqref{Semidiscrete, weak} gives rise to the so-called mass-lumped scheme (similar to \eqref{Semidiscrete, lumped mass}) that preserves the length shortening property for the CSF, which was missing for the original formulation (e.g., \eqref{AP-CSF, weak}). On the other hand, a more natural explanation was given in \cite[(1.6) and (3.12)]{Pozzi-Stinner}, where it was shown that \eqref{Semidiscrete, weak} is equivalent to the following scheme
			\begin{equation*}
				\int_{\S^1}|\p_\xi X_h|I_h(\p_t X_h\cdot v_h)\ \d \xi+ \int_{\S^1} \frac{\p_\xi X_h}{|\p_\xi X_h|} \cdot \p_\xi v_h\ \d \xi+\int_{\S^1}\frac{2\pi }{L_h} (\p_\xi X_h)^\perp\cdot v_h \  \d \xi=0,\ \forall\ v_h\in V_h,
			\end{equation*}
which looks like the original version \eqref{AP-CSF, weak} with the Lagrangian interpolation introduced for the first term.
\end{remark}

\smallskip

Next we present the main results of this paper.

\smallskip

%\smallskip
\begin{theorem}({\textbf{Convexity-preserving}})\label{Preservation of convexity}
Suppose the initial curve $X_h(\xi, 0)=I_hX^0$ is a convex $N$-polygon, then it is always a convex $N$-polygon during the evolution by \eqref{Semidiscrete, weak} if $q_j>0$ for all $j$.
 %polygon $\cP^0$ is a convex $N$-polygon, $N\ge 5$, then the polygon $\cP(t)$ evolved by geometric flow (\ref{Semidiscrete, lumped mass})  is also a convex $N$-polygon.
\end{theorem}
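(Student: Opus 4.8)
The plan is to recast \eqref{Semidiscrete, weak} as a system of ODEs for the vertices and then show, by a first‑crossing/continuity argument, that the oriented areas of the consecutive triangles never reach $0$.

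First I would test \eqref{Semidiscrete, weak} with the nodal functions $\varphi_i\mathbf e$, $\mathbf e\in\{(1,0),(0,1)\}$. On each $I_k$ one has $|\p_\xi X_h|\equiv q_k/h_k$ with $q_k=|X_k-X_{k-1}|$, and the decisive point is that the genuine lumped‑mass term and the added term $\int_{\S^1}\tfrac{\mathbf h^2|\p_\xi X_h|}{6}\p_\xi\p_t X_h\cdot\p_\xi v_h\,\d\xi$ combine so that off‑diagonal parts cancel and diagonal parts add, leaving exactly $\tfrac12(q_i+q_{i+1})\dot X_i$. With $u_k=(X_k-X_{k-1})/q_k$ and $m_k=\tfrac12(q_k+q_{k+1})$ this gives the autonomous system
\[
m_i\,\dot X_i=(u_{i+1}-u_i)-\frac{\pi}{L_h}\,(X_{i+1}-X_{i-1})^{\perp},\qquad i=1,\dots,N,
\]
with locally Lipschitz right‑hand side wherever all $q_i>0$; summing over $i$ gives $\tfrac{\d}{\d t}\sum_i(X_i-X_{i-1})=0$, so the polygon stays closed. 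I would then encode convexity by the doubled signed areas $A_i:=(X_i-X_{i-1})\times(X_{i+1}-X_i)=q_iq_{i+1}\sin\alpha_i$ ($\times$ the scalar cross product, $\alpha_i\in(-\pi,\pi]$ the exterior angle). Since the chain stays closed and, while it is convex, has total turning $\sum_i\alpha_i=2\pi$ (a locally constant multiple of $2\pi$), being a convex $N$‑gon is equivalent to $A_i>0$ for all $i$, so it suffices to prove $\min_iA_i(t)>0$ for all $t$.

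Suppose this fails and let $t^*>0$ be the first time $\min_iA_i=0$. Then $A_i(t^*)\ge0$ and $\alpha_i(t^*)\in[0,\pi]$ for all $i$, $A_i(t)>0$ for $t<t^*$, $\sum_i\alpha_i(t^*)=2\pi$, and $\dot A_j(t^*)\le0$ for every $j$ in the nonempty set $S=\{j:A_j(t^*)=0\}$. Differentiating $A_j$ along the ODE — the algebra collapses via $w^{\perp}\times w=-1$ and $v^{\perp}\times w=-v\cdot w$ — I would show that when $\alpha_j(t^*)=0$,
\[
\dot A_j(t^*)=\frac{2\pi\,q_{j+1}q_{j-1}(1-\cos\alpha_{j-1})}{L_h(q_{j-1}+q_j)}+\frac{2\pi\,q_jq_{j+2}(1-\cos\alpha_{j+1})}{L_h(q_{j+1}+q_{j+2})}+\frac{q_{j+1}\sin\alpha_{j-1}}{m_{j-1}}+\frac{q_j\sin\alpha_{j+1}}{m_{j+1}}\ \ge\ 0,
\]
so $\dot A_j(t^*)=0$, which forces $\alpha_{j-1}(t^*)=\alpha_{j+1}(t^*)=0$, i.e. $j\pm1\in S$ again with exterior angle $0$. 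Propagating around $\mathbb Z/N$ would force every exterior angle to be $0$, impossible since then $\sum_i(X_i-X_{i-1})=(\sum_iq_i)u\neq0$; hence no vertex in $S$ has exterior angle $0$, and because $\sum_i\alpha_i(t^*)=2\pi$ with the remaining angles in $(0,\pi)$, $S$ must be a single vertex $j$ with $\alpha_j(t^*)=\pi$, i.e. $u_{j+1}=-u_j$. For this vertex the same differentiation gives
\[
\dot A_j(t^*)=\frac{2\pi}{L_h}\frac{(q_{j+1}-q_j)^2}{q_j+q_{j+1}}+\frac{q_{j+1}}{m_{j-1}}\Bigl(\tfrac{\pi}{L_h}(q_{j-1}\cos\alpha_{j-1}+q_j)-\sin\alpha_{j-1}\Bigr)+\frac{q_j}{m_{j+1}}\Bigl(\tfrac{\pi}{L_h}(q_{j+1}+q_{j+2}\cos\alpha_{j+1})-\sin\alpha_{j+1}\Bigr),
\]
and I would argue $\dot A_j(t^*)>0$, contradicting $\dot A_j(t^*)\le0$ and completing the proof.

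Step 1, the formula for the $\alpha_j=0$ case, and the reduction to a lone exterior‑angle‑$\pi$ vertex are essentially bookkeeping. \textbf{The hard part is the last display}: unlike the $\alpha_j=0$ case, $\dot A_j(t^*)$ is not a sum of obviously nonnegative terms — the bracketed factors can be negative when $\alpha_{j\pm1}$ are near $\pi/2$ and the adjacent edges are short. Its positivity has to be wrung out of the global constraints: from $\sum_i(X_i-X_{i-1})=0$ one gets $\sum_{k\neq j,j+1}(X_k-X_{k-1})=(q_{j+1}-q_j)u$, which together with $\sum_i\alpha_i(t^*)=2\pi$ limits how small $q_j,q_{j+1}$ can be relative to $L_h$ while the neighbouring exterior angles stay away from $0$, so that the $\tfrac{\pi}{L_h}q_j$ and $\tfrac{\pi}{L_h}q_{j+1}$ contributions are not negligible. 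Turning this interplay into a clean inequality — not any individual computation — is the crux, and it is precisely why the nondegeneracy hypothesis $q_j>0$ is indispensable.
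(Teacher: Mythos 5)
Your derivation of the lumped‑mass ODE system matches \eqref{Semidiscrete, lumped mass}, and your first‑crossing framework together with the flat case $\alpha_j(t^*)=0$ runs parallel to the paper's Case (i). The genuine gap is exactly where you flag it: the spike case $\alpha_j(t^*)=\pi$. You leave the inequality $\dot A_j(t^*)>0$ unproven, you concede the bracketed factors can be negative, and you give no mechanism for converting the ``global constraints'' you invoke into an actual estimate. The root cause is that you encode convexity only through the adjacent signed areas $A_i=2S_i$, which throws away information that the paper keeps and that makes the spike case disappear without any derivative computation. Lemma \ref{Convexity lemma}(iv) records that convexity for $t<t^*$ forces positivity of \emph{all} oriented areas $S_j^k=\mathrm{Area}(X_{j-1},X_j,X_k)$, $k\neq j-1,j$, not just the consecutive ones; by continuity $S_j^{j+2}(t^*)\ge 0$. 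If $u_{j+1}=-u_j$ at $t^*$, i.e. $X_{j+1}-X_j=-\frac{q_{j+1}}{q_j}(X_j-X_{j-1})$, then
\[
S_{j+1}(t^*)=\tfrac12\,(X_{j+2}-X_j)\cdot(X_{j+1}-X_j)^{\perp}
=-\tfrac{q_{j+1}}{q_j}\,S_j^{j+2}(t^*)\le 0,
\]
and combined with $S_{j+1}(t^*)\ge 0$ (continuity from convexity) this gives $S_{j+1}(t^*)=0$, i.e. $j+1$ also lies in your degenerate set $S$ --- contradicting your own deduction that $S$ is a singleton. So the configuration you are differentiating at cannot occur, and the hard inequality never has to be faced. (The paper organizes this slightly differently, propagating the collinearity around the whole polygon and contradicting the total turning $2\pi$, but the engine is the same identity $S_{j+1}=d\,S_j^{j+2}$ with $d<0$.)

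Two further points. First, the non‑adjacent areas are also needed in your flat case: the paper's evolution formula \eqref{DSt} contains the terms $b_jS_{j+1}^{j-2}+c_jS_j^{j+2}$, and their nonnegativity at the first touching time again comes from Lemma \ref{Convexity lemma}(iv) plus continuity; your claimed closed‑form expression for $\dot A_j(t^*)$ when $\alpha_j=0$ is, in effect, the evaluation of those terms in the degenerate configuration, so you are implicitly using this fact there without having established it. Second, the correct repair is therefore not a sharper local inequality but a stronger induction hypothesis: carry the positivity of all $S_j^k$ (equivalently, full convexity in the sense of supporting lines) up to the first touching time, exactly as the paper does.
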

\medskip

\begin{theorem}({\textbf{Perimeter-decreasing}})\label{Perimeter decreasing property}
Let $X_h$ be the solution of \eqref{Semidiscrete, weak} with convex  initial data, then the perimeter of the closed curve is decreasing, i.e.,
\begin{equation}
	\frac{\d }{\d t}L_h \le 0.
\end{equation}

\end{theorem}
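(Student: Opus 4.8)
The plan is to compute $\frac{\d}{\d t} L_h$ directly and reduce it to a tractable inequality. Since $L_h = \sum_{j=1}^N q_j$ with $q_j = |X_j - X_{j-1}|$, we have
\[
\frac{\d}{\d t} L_h = \sum_{j=1}^N \frac{(X_j - X_{j-1})\cdot(\dot X_j - \dot X_{j-1})}{q_j}.
\]
The first step is to extract from the weak formulation \eqref{Semidiscrete, weak} the system of ODEs governing the vertices $X_j(t)$, by testing with the nodal basis functions $\varphi_j$; this produces the mass-lumped form (analogous to \eqref{Semidiscrete, lumped mass}) in which $\p_t X_h$ appears with a diagonal (lumped) mass contribution. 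The natural move is then to instead test \eqref{Semidiscrete, weak} with $v_h = \p_t X_h$ itself: the first and third terms combine into $\int_{\S^1}\l(|\p_\xi X_h| + \tfrac{\mathbf h^2|\p_\xi X_h|}{6}\r)|\p_t X_h|^2\,\d\xi \ge 0$, the nonlocal term gives $\int_{\S^1}\frac{2\pi}{L_h}(\p_\xi X_h)^\perp\cdot \p_t X_h\,\d\xi$, and the curvature term gives $\int_{\S^1}\frac{\p_\xi X_h}{|\p_\xi X_h|}\cdot\p_\xi\p_t X_h\,\d\xi$, which is exactly $\frac{\d}{\d t}L_h$ since $\frac{\d}{\d t}|\p_\xi X_h| = \frac{\p_\xi X_h\cdot\p_\xi\p_t X_h}{|\p_\xi X_h|}$ on each edge. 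So
\[
\frac{\d}{\d t} L_h = -\int_{\S^1}\l(|\p_\xi X_h| + \tfrac{\mathbf h^2|\p_\xi X_h|}{6}\r)|\p_t X_h|^2\,\d\xi - \int_{\S^1}\frac{2\pi}{L_h}(\p_\xi X_h)^\perp\cdot \p_t X_h\,\d\xi.
\]
The first term is manifestly $\le 0$; the whole difficulty is to control the sign of the nonlocal contribution, call it $\mathcal R := -\int_{\S^1}\frac{2\pi}{L_h}(\p_\xi X_h)^\perp\cdot \p_t X_h\,\d\xi$.

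Next I would rewrite $\mathcal R$ geometrically. Note $\int_{\S^1}(\p_\xi X_h)^\perp\cdot \p_t X_h\,\d\xi = \sum_j (X_j - X_{j-1})^\perp\cdot \int_{I_j}\p_t X_h\,\d\xi = \sum_j (X_j-X_{j-1})^\perp\cdot \tfrac{h_j}{2}(\dot X_{j-1}+\dot X_j)$. Summation by parts (using periodicity) turns $\mathcal R$ into an expression of the form $-\frac{2\pi}{L_h}\sum_j c_j\cdot \dot X_j$ where $c_j$ is a fixed combination of the edge vectors adjacent to vertex $j$; in fact, up to the tangential gauge, $\sum_j (\text{edge})^\perp$ telescopes and the net effect of the nonlocal term on $\frac{\d}{\d t}L_h$ should be $-\frac{2\pi}{L_h}\cdot\frac{\d}{\d t}(2A_h)$ where $A_h$ is the signed area enclosed by the polygon — this is the discrete analogue of the exact identity $\frac{\d}{\d t}L = -2\pi\langle H\rangle$ combined with area preservation. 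Indeed the continuous computation gives $\frac{\d}{\d t}L = -\int(H - 2\pi/L)H\,\d s - \frac{2\pi}{L}\int H\,\d s + \frac{4\pi^2}{L}$, and the last two cancel by turning tangents; I expect the discrete scheme to reproduce a structurally identical cancellation once one verifies that $X_h$ stays convex (Theorem \ref{Preservation of convexity}, with $q_j>0$ guaranteed by the error estimate). So the strategy is: first invoke convexity, then show that on a convex polygon the nonlocal term's contribution to $\frac{\d}{\d t}L_h$ either cancels against a discrete turning-tangent identity or is bounded by the dissipative first term.

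Concretely, after reducing to the vertex ODEs one lands on a finite sum over $j$ involving the exterior angles $\theta_j$ of the convex polygon, the edge lengths $q_j$, and the mesh sizes $h_j$. The paper itself signals that "the perimeter-decreasing property can be reduced to a pure trigonometric inequality when the polygon keeps convex," so the endgame is: parametrize each edge direction by its turning angle, write $\dot X_j$ from the lumped ODE in terms of the neighbouring $\theta$'s and $q$'s, substitute into $\frac{\d}{\d t}L_h \le 0$, and clear denominators to obtain an inequality of the form $\sum_j (\text{something in } \theta_{j-1},\theta_j,\theta_{j+1}) \ge 0$. Using $\sum_j \theta_j = 2\pi$ and convexity ($0 < \theta_j < \pi$, or more precisely $\theta_j \in (0,\pi)$ with the sum fixed), one reduces to a local convexity/Jensen-type estimate for a function like $\theta \mapsto \tan(\theta/2)$ or $\theta\mapsto 1-\cos\theta$ on $(0,\pi)$. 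I would prove the required scalar inequality by checking convexity of the relevant one-variable function and applying Jensen, or by a direct two-variable estimate pairing adjacent terms.

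The main obstacle, as I see it, is the bookkeeping that converts the weak-form identity into the clean trigonometric statement: one must correctly handle the lumped mass term (which couples $\dot X_j$ with $\dot X_{j\pm 1}$ through the mass matrix, not diagonally in the non-lumped part), correctly express $\p_t X_h$ restricted to each edge, and track the nonlocal coefficient $2\pi/L_h$ through the summation by parts so that the discrete turning-tangent cancellation is exact. A secondary subtlety is that the argument genuinely needs $q_j > 0$ (nondegeneracy of vertices) for the expressions $1/q_j$ to make sense and for the angles $\theta_j$ to be well defined — this is exactly why the theorem is stated for convex initial data and why the error estimate (Section 4) is invoked in the background. Once the reduction to the scalar inequality is in hand, the trigonometric estimate itself should be elementary.
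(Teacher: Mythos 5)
Your starting identity (testing \eqref{Semidiscrete, weak} with $v_h=\p_t X_h$, which lies in $V_h$) is legitimate and, after accounting for the mass lumping, gives the same formula the paper obtains from \eqref{dq1}: $\frac{\d}{\d t}L_h=-\sum_j\frac{q_j+q_{j+1}}{2}|\dot X_j|^2-\frac{2\pi}{L_h}\sum_j\cT_j\cdot\cN_{j+1}$ (minor slip: the third term of the weak form contributes $|\p_\xi\p_t X_h|^2$, not $|\p_t X_h|^2$, though it is still nonnegative). The genuine gap is in what you do with the nonlocal term. Your central mechanism --- that it cancels via a ``discrete turning-tangent identity,'' mirroring the continuous cancellation $\int H\,\d s=2\pi$ --- is false. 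The nonlocal term equals $\frac{2\pi}{L_h}\frac{\d}{\d t}A_h$ (up to sign conventions), and the semi-discrete scheme is \emph{not} exactly area-preserving: the paper's own numerics report an $O(h^2)$ area loss, so $\dot A_h$ has no sign. Equivalently, in angle variables the nonlocal term is $\frac{2\pi}{L_h}\sum_j\sin\alpha_j$, and for a convex polygon $\sum_j\sin\alpha_j<\sum_j\alpha_j=2\pi$ strictly, so there is no exact cancellation; the deficit must be absorbed by the dissipative term, and showing this is the entire content of the theorem.

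Your fallback (``or is bounded by the dissipative first term'') is the right idea but is not carried out, and the tools you propose would not suffice. The paper's proof requires two specific steps you do not supply: (i) a Cauchy--Schwarz step $\bigl(\sum_j 2\sin(\alpha_j/2)\bigr)^2\le \bigl(\sum_j(q_j+q_{j+1})\bigr)\sum_j\frac{4\sin^2(\alpha_j/2)}{q_j+q_{j+1}}$, which decouples the arbitrary edge lengths $q_j$ from the angles; and (ii) the resulting purely trigonometric inequality $\bigl(\sum_j\sin\beta_j\bigr)^2\ge\frac12\bigl(\sum_j\beta_j\bigr)\sum_j\sin(2\beta_j)$ for $\beta_j=\alpha_j/2\in[0,\pi/2]$, which the paper proves by an induction-and-monotonicity argument (Lemma \ref{A trigonometric lemma}). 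This is not a Jensen-type estimate for a single convex function such as $\tan(\theta/2)$ or $1-\cos\theta$, and pairing adjacent terms will not reduce it to one, since the $N$ angles enter only through the two symmetric sums with the global constraint $\sum_j\beta_j=\pi$. You correctly note that convexity (Theorem \ref{Preservation of convexity}) and nondegeneracy $q_j>0$ are prerequisites, but without the cancellation claim corrected and without steps (i) and (ii), the proof does not close.
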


\begin{remark}
For the cases of classical CSF or anisotropic CSF and the corresponding solutions based on similar formulations as in \eqref{Semidiscrete, weak}, by direct computations based on a finite difference structure, it was shown in \cite{Dziuk1999,Dziuk1994} that the length of each element of $X_h$ is decreasing, i.e., $q_j'(t)\le 0$ for $1\le j\le N$. This directly implies the perimeter-decreasing property. We point out  that this property can also be obtained by standard energy estimates for the CSF. However, in our AP-CSF case, both arguments fail to derive the perimeter-decreasing property. We have to carry out a more careful investigation in which the convexity property plays a vital role (Section 3.2).
\end{remark}

\medskip

\begin{theorem}({\textbf{Error estimate}})\label{Error estimate}
Let $X(\xi,t)$ be a solution of (\ref{AP-CSF, parametrization})  satisfying Assumption 2.2. Assume that the partition of $\S^{1}$ satisfies Assumption 2.1. Then there exists $h_0>0$ such that for all $0<h\le h_0$, there exists a unique semi-discrete solution $X_{h}$ for \eqref{Semidiscrete, weak}. Furthermore, the solution satisfies
	\begin{equation*}
	\begin{split}
		\int^T_0\|\p_t X-\p_t X_h\|^2_{L^2}\mathrm{d}s+\sup_{[0,T]}\|X-X_h\|^2_{H^1}\le Ch^2,
	\end{split}
	\end{equation*}
where $h_0$ and $C$ depend on $C_p, C_P,\kappa_1,\kappa_2, T$ and $K(X)$.
	 In particular, we have
\begin{equation}\label{Nondegenerate of edge length}
	\min_{j}q_j(t) >0,\quad \forall\ t\in [0,T].
\end{equation}
\end{theorem}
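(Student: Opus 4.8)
The plan is to follow the standard framework for convergence analysis of parabolic parametric finite element schemes (as in Dziuk \cite{Dziuk1999} and Pozzi-Stinner \cite{Pozzi-Stinner}), adapted to handle the nonlocal term $2\pi/L_h$. First I would introduce the error decomposition $X - X_h = (X - I_h X) + (I_h X - X_h)$; the first part is controlled by the interpolation estimates \eqref{Interpolation estimate}, so the real work is to bound $e_h := I_h X - X_h \in V_h$. To do this I would set up a bootstrap (continuation) argument: on a maximal time interval $[0, t^*]$ assume an a priori bound of the form $\|\p_\xi e_h\|_{L^\infty} \le h^{1/2}$ (or some power guaranteeing, via the inverse estimate \eqref{Inverse estimate} and interpolation, that $|\p_\xi X_h|$ stays in a fixed range like $[\kappa_1/2, 2\kappa_2]$), then prove a strictly better estimate $\|e_h\|_{H^1} \le C h$ on $[0, t^*]$, which for $h$ small enough forces $t^* = T$.

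The core estimate is obtained by testing the error equation with $v_h = \p_t e_h$ (or with $e_h$ itself, depending on which norm one closes first). Subtracting \eqref{Semidiscrete, weak} from the weak form satisfied by $I_h X$ — which itself differs from the exact equation \eqref{AP-CSF, weak} by interpolation-consistency terms and by the mass-lumping term $\tfrac{\mathbf h^2 |\p_\xi X_h|}{6} \p_\xi \p_t X_h \cdot \p_\xi v_h$ — produces a sum of terms. The second (curvature) term gives the good coercive contribution $\int_{\S^1} |\p_\xi X_h|^{-1} |\p_\xi \p_t e_h|^2$ after a careful manipulation: one writes $\frac{\p_\xi X}{|\p_\xi X|} - \frac{\p_\xi X_h}{|\p_\xi X_h|}$ and splits off the leading linear-in-$\p_\xi e_h$ part, using the elementary inequality that $a \mapsto a/|a|$ has a positive-definite derivative in the directions orthogonal to $a$; the remaining pieces are higher order and absorbed. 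The first (mass) term and the lumping term are handled with the regularity of $X$ (Assumption 2.2) and the interpolation bounds. The genuinely new term is the nonlocal one: $\frac{2\pi}{L}(\p_\xi X)^\perp - \frac{2\pi}{L_h}(\p_\xi X_h)^\perp$, which I would split as $\frac{2\pi}{L}\big((\p_\xi X)^\perp - (\p_\xi X_h)^\perp\big) + 2\pi\big(\frac{1}{L} - \frac{1}{L_h}\big)(\p_\xi X_h)^\perp$; the first piece is linear in $\p_\xi e_h$ and harmless, while for the second I use $|L - L_h| \le \|\p_\xi(X - X_h)\|_{L^1} \le C(h + \|e_h\|_{H^1})$ together with the uniform lower bound $L, L_h \ge c > 0$ (which follows from the convexity/length control, or simply from $L_h$ being close to $L$). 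Assembling, Gronwall's inequality in the quantity $\|e_h\|_{H^1}^2 + \int_0^t \|\p_t e_h\|_{L^2}^2$ yields $\le C h^2$, closing the bootstrap.

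The main obstacle I anticipate is twofold. First, extracting the coercive term cleanly from the difference of the normalized tangent vectors while keeping all error terms at order $h^2$ requires the a priori $L^\infty$-control on $\p_\xi e_h$, so the continuation argument and the inverse estimate must be orchestrated so that the constants do not blow up — this is the delicate bookkeeping that makes such proofs long. Second, the nonlocal factor $1/L_h$ couples all vertices globally, so one must be careful that the perturbation $\frac{1}{L} - \frac{1}{L_h}$ does not spoil the structure of the Gronwall argument; fortunately it enters only through a zeroth-order (in derivatives of $e_h$) term bounded by $C\|e_h\|_{H^1}$, so it is ultimately benign. Once $\|X - X_h\|_{H^1} \le Ch$ is established, \eqref{Nondegenerate of edge length} follows because $q_j = |X_j - X_{j-1}|$ differs from $h_j |\p_\xi X|$ evaluated on $I_j$ (which is $\ge \kappa_1 C_p h$ by Assumptions 2.1–2.2) by at most $C h \cdot \|e_h\|_{H^1} + (\text{interpolation error}) = O(h^{3/2})$, hence stays positive for $h$ small; existence and uniqueness of $X_h$ on $[0,T]$ come from the local-in-time ODE theory for the vertex system plus this non-degeneracy, which prevents finite-time breakdown.
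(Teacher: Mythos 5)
Your overall architecture --- a continuation argument keeping $|\p_\xi X_h|$ in a fixed range $[\kappa_1/2,2\kappa_2]$, testing the error equation with the time derivative of the error, splitting the nonlocal term into a linear piece plus a piece controlled by $|L-L_h|\le \|q-q_h\|_{L^1}$, and closing with Gronwall --- matches the paper's. But there is a genuine gap at the point where you claim Gronwall closes in $\|e_h\|_{H^1}^2+\int_0^t\|\p_t e_h\|_{L^2}^2$. Testing with $v_h=I_h\p_t X-\p_t X_h=\p_t e_h$ does \emph{not} produce the coercive term $\int|\p_\xi X_h|^{-1}|\p_\xi\p_t e_h|^2$ you describe. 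What the curvature term actually yields is the time derivative of the energy $\int_{\S^1}(1-\cT\cdot\cT_h)\,q_h\,\d\xi=\tfrac12\int_{\S^1}|\cT-\cT_h|^2q_h\,\d\xi$, so the energy identity controls only $\int_0^t\int q_h|\p_t X-\p_t X_h|^2$ and $\sup_t\int|\cT-\cT_h|^2q_h$, i.e.\ the velocity error and the error in the \emph{direction} of the tangent, while leaving $\int_0^t\|q-q_h\|_{L^2}^2$ on the right-hand side (this is exactly Lemma \ref{Consistency estimate lemma}). Since, by the identity \eqref{Basic equality}, $|\p_\xi X-\p_\xi X_h|^2=(q-q_h)^2+qq_h|\cT-\cT_h|^2$, the $H^1$ error is not controlled until $\|q-q_h\|_{L^2}$ is, and your loop does not close.

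The missing ingredient is the paper's Lemma \ref{Norm difference estimate}: the separate bound $\|q-q_h\|_{L^2}^2\le C\int_0^t\int|\p_t X-\p_t X_h|^2q_h+C\int_0^t\int|\cT-\cT_h|^2q_h+Ch^2$, which is the most technical part of the proof. It is not obtained from the weak form at all, but from the finite-difference/lumped-mass structure \eqref{Semidiscrete, lumped mass}: one writes $h_jq-q_j$ as a time integral, inserts the pointwise evolution laws \eqref{Length element equation} and \eqref{dq2} for the continuous and discrete length elements, and estimates the resulting terms using the preparatory bounds of Lemma \ref{Preparation lemma} (including $\int_0^t(q_j+q_{j+1})|\dot X_j-R_j|^2\,\d s\le Ch$, which has no analogue in your sketch). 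Only the combination of this lemma with the energy estimate lets Gronwall deliver the $O(h^2)$ bound, after which Lemma \ref{Control lemma} upgrades the a priori bounds on $|\p_\xi X_h|$ and forces $T^*=T$. Your handling of the nonlocal term and your derivation of the non-degeneracy \eqref{Nondegenerate of edge length} are consistent with the paper, but without the length-element lemma the argument as proposed would fail.
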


\section{Convexity-preserving and perimeter-decreasing properties}
Similar as in \cite{Deckelnick-Dziuk-Elliott}, we rewrite (\ref{Semidiscrete, weak}) into a lumped mass formulation. More precisely, taking
	\[	v_h=(\varphi_j,0)=\Big(\frac{\xi-\xi_{j-1}}{h_j}\chi_{I_j}+
\frac{\xi_{j+1}-\xi}{h_{j+1}}\chi_{I_{j+1}},0\Big),
		\]
		in (\ref{Semidiscrete, weak}), where $\chi$ is the characteristic function, calculations in \cite{Dziuk1999} give
		\begin{align*}
	&\quad\int_{\S^1}|\p_\xi X_h|\p_t X_h\cdot v_h\ \d \xi+ \int_{\S^1} \frac{\p_\xi X_h}{|\p_\xi X_h|} \cdot \p_\xi v_h\ \d \xi &+\int_{\S^1}\frac{\mathbf{h}^2|\p_\xi X_h|}{6}\p_\xi\p_t X_h\cdot \p_\xi v_h\mathrm{~d} \xi\\
	&=\frac{q_j+q_{j+1}}{2}\dot X_j^{[1]}-\big(\cT_{j+1}^{[1]}-\cT_{j}^{[1]}\big),
\end{align*}
		where $a^{[1]}$ denotes the first component of the vector $a\in \R^2$, and
\[\cT_{j}:=\frac{X_j-X_{j-1}}{|X_j-X_{j-1}|}, \quad \cN_j=\Big(\frac{X_j-X_{j-1}}{|X_j-X_{j-1}|}\Big)^{\perp}.\]
 For the last term involving the perimeter, we similarly compute
	\begin{align*}
			&\int_{\S^1}\frac{2\pi }{L_h} (\p_\xi X_h)^\perp\cdot v_h \  \d \xi\\
		=& \int_{\xi_{j-1}}^{\xi_j}\frac{2\pi }{L_h}\frac{q_j}{h_j} \cN_j\cdot\Big(\frac{\xi-\xi_{j-1}}{h_j},0\Big)\ \d\xi+\int_{\xi_{j}}^{\xi_{j+1}}\frac{2\pi }{L_h} \frac{q_{j+1}}{h_{j+1}} \cN_{j+1}\cdot \Big(\frac{\xi_{j+1}-\xi}{h_{j+1}},0\Big)\ \d\xi\\
		=& \frac{\pi}{L_h}q_j\cN_j^{[1]} +\frac{\pi}{L_h} q_{j+1}\cN_{j+1}^{[1]}.
		\end{align*}
Similarly taking $v_h=(0,\varphi_j)$ yields the equation for the second component. Thus the weak formulation \eqref{Semidiscrete, weak} is equivalent to the following lumped mass formulation
\begin{equation}\label{Semidiscrete, lumped mass}
	\frac{q_j+q_{j+1}}{2}\dot X_j=\cT_{j+1}-\cT_{j}-\frac{\pi}{L_h}(q_j\cN_j+q_{j+1}\cN_{j+1})=
\cT_{j+1}-\cT_{j}-\frac{\pi}{L_h}(X_{j+1}-X_{j-1})^\perp.
	\end{equation}
Hence it remains to solve the ODE system \eqref{Semidiscrete, lumped mass} and the image of $X_h$ is a polygon with $X_j(t)$ as the vertices.

For further studies, we derive some important formulae which will be used frequently.  Straightforward calculations as in \cite[Proposition 4.1]{Deckelnick-Dziuk-Elliott}, \cite[Lemma 3.1, Lemma 4.2]{Dziuk1999} and \cite[Lemma 2.4, Lemma 3.2]{Pozzi-Stinner} lead to
\begin{align}
		&\qquad\qquad\p_t|\p_\xi X | = -|\p_\xi X||\p_t X |^2+\p_t X\cdot R |\p_\xi X|,\label{Length element equation}\\
	\frac{\d }{\d t}q_j		&=-\frac{1}{q_j+q_{j+1}}|\cT_{j+1}-\cT_{j}|^2-\frac{1}{q_j+q_{j-1}}|\cT_{j-1}-\cT_{j}|^2 +\cT_j\cdot\l(R_j-R_{j-1}  \r) \label{dq1}\\
		&=-\frac{q_j+q_{j+1}}{4}|\dot X_j-R_j|^2-\frac{q_j+q_{j-1}}{4}|\dot X_{j-1}-R_{j-1}|^2+\cT_j\cdot\l(R_j-R_{j-1}  \r),\label{dq2}
			\end{align}
	where for simplicity we denote
	\be\label{rrj}
	R:=-\frac{2\pi}{L}\cN,\quad R_j:=-\frac{2\pi}{L_h}\frac{\cN_{j}q_j+\cN_{j+1}q_{j+1}}{q_j+q_{j+1}}.
	\ee
By using  above quantities, the equation (\ref{Semidiscrete, lumped mass}) can also be written as
\begin{equation}\label{Semidiscrete, another formulation}
	\dot X_j-R_j=2\l(\cT_{j+1}-\cT_j\r)/(q_j+q_{j+1}).
\end{equation}

In this section, we will prove that this semi-discrete geometric flow preserves the convexity of polygons under the nondegeneration property of vertices, which can be guaranteed by \eqref{Nondegenerate of edge length}. Furthermore, the perimeter-decreasing property is also shown for convex initial data.

\subsection{Proof of Theorem \ref{Preservation of convexity}}
First, we carry out some clarifications concerning with a polygon. We denote $\cP=(Y_1,\ldots,Y_N)$ as an $N$-polygon with $Y_j$ being its vertices and $\overline{Y_{j-1}Y_j}$ being the edge connecting $Y_{j-1}$ and $Y_j$. We emphasize that $\cP=(Y_1,\ldots,Y_N)$ has exactly $N$ sides, i.e., none of any three adjacent points are collinear. We say $\cP$ is a convex polygon if it is the boundary of a convex set. Without loss of generality, we assume that $Y_j$ is arranged in an anticlockwise way. We define the oriented area of  three points $Y_1,Y_2,Y_3\in \R^2$ as
\[
\mathrm{Area}(Y_1,Y_2,Y_3):=\frac{1}{2}\begin{vmatrix}
	1 & x_{1} & y_{1}\\
	1 & x_{2} & y_{2}\\
	1 & x_{3} & y_{3}\\
\end{vmatrix}=\frac{1}{2}(Y_3-Y_2)\cdot (Y_2-Y_1)^\perp,
\]
where $Y_i=(x_i,y_i)$, $i=1,2,3$. The following characterizations of convexity are straightforward.

\medskip
\begin{lemma}\label{Convexity lemma}
Let $\cP=(X_1,\ldots,X_N)$ be an $N$-polygon. The following statements are equivalent:
\begin{itemize}
\item [(i)] The $N$-polygon $\cP=(X_1,\ldots,X_N)$ is convex;\vspace{2mm}

 \item [(ii)] Any internal angle $\angle X_{j-1}X_jX_{j+1}<\pi$ for $j=1,\ldots,N$;\vspace{2mm}

 \item [(iii)] $S_j:=\mathrm{Area}(X_{j-1},X_j,X_{j+1})>0$ for  $j=1,\ldots,N$;\vspace{2mm}

     \item [(iv)] $S_j^k:=\mathrm{Area}(X_{j-1},X_j,X_{k})>0$ for  $j=1,\ldots,N$ and $k\neq j-1,j$.
     \end{itemize}
 Here, we set $X_0=X_N$ and $X_{N+1}=X_1$ when involved.
\end{lemma}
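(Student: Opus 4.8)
The statement to prove is Lemma~\ref{Convexity lemma}, the equivalence of four characterizations of convexity for an $N$-polygon. Let me sketch a proof plan.

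\medskip

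The plan is to establish the cycle of implications $(i)\Rightarrow(ii)\Rightarrow(iii)\Rightarrow(iv)\Rightarrow(i)$, together with the elementary observation that for a polygon with exactly $N$ sides (no three consecutive vertices collinear, as stipulated), each of the ``$>$'' conditions is the strict version of a ``$\ge$'' condition that cannot degenerate. The key geometric fact underlying everything is the sign interpretation of the oriented area: $\mathrm{Area}(Y_1,Y_2,Y_3)>0$ if and only if the ordered triple $(Y_1,Y_2,Y_3)$ is traversed anticlockwise, equivalently $Y_3$ lies strictly to the left of the directed line through $Y_1$ and $Y_2$. Since we assume the vertices $X_1,\dots,X_N$ are labelled anticlockwise, I would first record this dictionary and note that $S_j = \tfrac12(X_{j+1}-X_j)\cdot(X_j-X_{j-1})^\perp$ has the same sign as $\sin(\angle\text{ turning at }X_j)$, which ties $(ii)$ and $(iii)$ together directly: the interior angle at $X_j$ is less than $\pi$ precisely when the exterior turning angle is positive, i.e. when $S_j>0$ (and it is never $0$ or $\pm\pi$ because no three consecutive vertices are collinear).

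\medskip

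For $(i)\Rightarrow(ii)$: if $\cP$ bounds a convex region $\Omega$, then at each vertex $X_j$ the two incident edges lie in $\Omega$, and convexity of $\Omega$ forces the interior angle to be $\le\pi$; strict inequality follows from the ``exactly $N$ sides'' hypothesis. For $(iii)\Rightarrow(iv)$: this is the main substantive step. Assuming $S_j>0$ for all $j$, I want to show $S_j^k = \mathrm{Area}(X_{j-1},X_j,X_k)>0$ for every $k\ne j-1,j$. Fix $j$; consider the directed line $\ell$ through $X_{j-1}$ and $X_j$. I would argue that the broken path $X_j, X_{j+1},\dots,X_{j-1}$ (going the long way around) stays strictly on the left side of $\ell$ except at its two endpoints. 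This can be done by induction along the path using the hypothesis $S_m>0$: at each step the turn is to the left, so the path is ``winding'' consistently anticlockwise and cannot cross back over $\ell$ — here one uses that the total turning of a simple closed polygon is exactly $2\pi$ (the discrete theorem of turning tangents), so that with all turns positive and summing to $2\pi$ the direction vector of the path sweeps monotonically through an angle range of total measure $2\pi$, never returning to point ``backward along $\ell$'' in a way that would place a later vertex on the right. This is the step I expect to require the most care, because one must genuinely use simplicity/non-self-intersection (equivalently the turning-number normalization), not merely positivity of the local quantities $S_j$ — positivity of all $S_j$ alone would also be consistent with a ``star-shaped'' winding-number-$\ge2$ polygon, which is excluded here by the standing convention that $\cP$ has exactly $N$ sides and bounds a region.

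\medskip

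For $(iv)\Rightarrow(i)$: given $S_j^k>0$ for all admissible $j,k$, I would show that $\cP$ is the boundary of the convex hull of $\{X_1,\dots,X_N\}$, hence convex. Concretely, each edge $\overline{X_{j-1}X_j}$ has all other vertices strictly on its left (that is exactly the content of $S_j^k>0$ for all $k$), so each edge lies on a supporting line of the point set; the intersection of the corresponding closed half-planes is a convex polygon whose boundary is precisely $\cP$, since consecutive supporting lines meet at the $X_j$. Finally I would close the loop by noting $(ii)\Rightarrow(iii)$ is immediate from the sign dictionary above, so all four statements are equivalent. The routine verifications — the determinant identity for $\mathrm{Area}$, the $\sin$-of-turning-angle computation, and the half-plane intersection argument — I would state but not belabor; the genuine content is the simplicity-using induction in $(iii)\Rightarrow(iv)$.
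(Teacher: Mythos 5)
Your plan is correct, but it routes the equivalence differently from the paper. The paper's proof dismisses (i)$\Leftrightarrow$(ii)$\Leftrightarrow$(iii) and (iv)$\Rightarrow$(iii) as clear and spends all its effort on (i)$\Rightarrow$(iv): it invokes the support property of convex polygons (citing Gruber) to produce, for each edge $\overline{X_{j-1}X_j}$, a supporting line containing that edge with all of $\cP$ in one closed halfplane, writes the normal as $\cN=\varepsilon(X_j-X_{j-1})^\perp$, and uses (iii) only to pin down the sign $\varepsilon>0$, whence $S_j^k>0$. You instead make (iii)$\Rightarrow$(iv) the substantive step, proved by a turning-angle/monotone-sweep induction along the path $X_j,X_{j+1},\dots,X_{j-1}$, and then recover (i) from (iv) by exhibiting $\cP$ as the boundary of an intersection of halfplanes --- essentially constructing the supporting lines that the paper imports as a black box. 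What your route buys is a self-contained proof of the implication (iii)$\Rightarrow$(i) that the paper waves through as ``clear,'' and you correctly identify the one place where simplicity (equivalently the normalization of total turning to $2\pi$, rather than a multiple of it) must be used --- positivity of all $S_j$ alone is satisfied by a star polygon of winding number $2$. What it costs is that this very step is the least detailed part of your sketch: to make it airtight you should track the signed distance of the vertices from the line through $X_{j-1},X_j$ as a partial sum of $q_{m+1}\sin(\theta_{m+1}-\theta_0)$ over the edge directions $\theta_m$, and use the monotone sweep of the $\theta_m$ through $(\theta_0,\theta_0+2\pi)$ to conclude the partial sums are strictly positive at all intermediate vertices. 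The paper's argument is shorter because it leans on a cited support theorem; yours is more elementary but requires finishing the induction.
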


\begin{proof}
Clearly we have (i)$\Leftrightarrow$(ii)$\Leftrightarrow$(iii) and (iv)$\Rightarrow$(iii). It suffices to show (i)$\Rightarrow$(iv). Indeed, by the support property of convex polygons \cite[Theorem 4.2]{Gruber}, for any $X\in \overline{X_{j-1}X_j}$, there exists a support hyperplane $\ell(X)$ such that $\mathcal{P}$ is contained in one of the two closed halfspaces determined by $\ell(X)$. It's obvious that $\overline{X_{j-1}X_j}\subseteq\ell(X)$. In particular, for any $k\neq j-1,j$, $X_k$ lies in the same halfspace determined by $\ell(X)$, i.e., there exists a nonzero vector $\cN\in \R^2$ such that
\begin{equation}\label{Convexity condition 1}
	\cN\cdot (X_j-X_{j-1})=0,\quad  \cN\cdot (X_{k}-X_{j-1})>0 \,\,(k\neq j-1, j).
\end{equation}
Thus we can write $\cN=\varepsilon(X_j-X_{j-1})^{\perp}$. Noticing
\[2S_j^k
=(X_k-X_j)\cdot (X_j-X_{j-1})^\perp=(X_k-X_{j-1})\cdot \cN/\varepsilon,\quad k\neq j-1, j,
\]
which together with (iii) implies $\varepsilon>0$, this yields (iv) by recalling \eqref{Convexity condition 1} and the proof is completed.
\end{proof}

\smallskip

Inspired by \eqref{Xh} and \eqref{Semidiscrete, lumped mass}, to prove Theorem \ref{Preservation of convexity}, it suffices to show that for any $t>0$, the $N$-polygon $\cP=(X_1,\ldots,X_N)$ is convex. We first compute the evolution formula of the oriented area of the triangles consisting of three adjacent vertices.

\begin{lemma}
Under the flow \eqref{Semidiscrete, lumped mass}, if $q_j>0$ for any $j$, then the oriented area $S_j(t)$ satisfies
\begin{equation}\label{DSt}
\begin{split}
\frac{\d}{\d t}S_j(t)
	&= -a_j\cdot S_j+b_j\cdot S_{j+1}^{j-2}+c_j\cdot S_j^{j+2}
+\frac{1}{q_{j}+q_{j+1}}\frac{\pi}{L_h}|X_{j+1}-X_{j-1}|^2\\
&\quad+\frac{1}{q_{j-1}+q_{j}}\frac{\pi}{L_h}(X_j-X_{j+1})\cdot(X_j-X_{j-2})\\
&\quad+\frac{1}{q_{j+1}+q_{j+2}}\frac{\pi}{L_h}(X_j-X_{j-1})
\cdot(X_{j}-X_{j+2}),
\end{split}	
\end{equation}
where $a_j,b_j,c_j$ are positive functions defined by
\begin{align*}
	a_{j}&=\frac{2}{q_j q_{j-1}}+\frac{2}{q_{j}q_{j+1}}+\frac{2}{q_{j+1}q_{j+2}},\quad
	b_j=\frac{2}{q_{j-1}(q_{j-1}+q_j)},\quad  c_j=\frac{2}{q_{j+2}(q_{j+1}+q_{j+2})}.
\end{align*}

\end{lemma}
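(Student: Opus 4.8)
The plan is to prove \eqref{DSt} by direct differentiation of the formula $2S_j(t) = (X_{j+1}-X_j)\cdot(X_j-X_{j-1})^\perp$ and then substituting the evolution equation \eqref{Semidiscrete, lumped mass} (equivalently \eqref{Semidiscrete, another formulation}) for each of the velocities $\dot X_{j-1}$, $\dot X_j$, $\dot X_{j+1}$. Concretely, I would write
\[
\frac{\d}{\d t}S_j = \frac12\big[(\dot X_{j+1}-\dot X_j)\cdot(X_j-X_{j-1})^\perp + (X_{j+1}-X_j)\cdot(\dot X_j-\dot X_{j-1})^\perp\big],
\]
and replace $\dot X_i$ with its expression from \eqref{Semidiscrete, lumped mass}, namely $\dot X_i = \tfrac{2}{q_i+q_{i+1}}(\cT_{i+1}-\cT_i) - \tfrac{2\pi}{L_h(q_i+q_{i+1})}(X_{i+1}-X_{i-1})^\perp$. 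This splits the computation cleanly into a ``local'' part coming from the $\cT_{i+1}-\cT_i$ terms and a ``nonlocal'' part coming from the $\tfrac{\pi}{L_h}$ terms; the three nonlocal contributions in \eqref{DSt} should simply fall out of the nonlocal part after using $(Y^\perp)\cdot(Z^\perp) = Y\cdot Z$ and $(Y^\perp)^\perp = -Y$, together with the telescoping identity $X_{i+1}-X_{i-1} = q_{i+1}\cT_{i+1} + q_i\cT_i$ where needed.

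The heart of the matter is the local part: showing that the terms built from $\cT_{i+1}-\cT_i$ (for $i=j-1,j,j+1$) reorganize into the combination $-a_j S_j + b_j S_{j+1}^{j-2} + c_j S_j^{j+2}$. The key trick here is to express each difference of tangent vectors again via edge vectors, $\cT_{i+1}-\cT_i = \tfrac{1}{q_{i+1}}(X_{i+1}-X_i) - \tfrac{1}{q_i}(X_i-X_{i-1})$, so that all six terms become dot products of edge vectors $X_{m}-X_{m-1}$ with perpendicular edge vectors $(X_{n}-X_{n-1})^\perp$. Each such bracket $(X_m-X_{m-1})\cdot(X_n-X_{n-1})^\perp$ is (twice) an oriented area of a triangle spanned by adjacent or nearly-adjacent vertices, i.e. some $S_\ell$ or $S_\ell^k$. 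I would then carefully collect coefficients: the terms not involving vertices beyond $\{X_{j-2},\dots,X_{j+2}\}$ and proportional to $S_j$ get gathered into $-a_j$, while the ``boundary'' terms reaching to $X_{j-2}$ give $b_j S_{j+1}^{j-2}$ and those reaching to $X_{j+2}$ give $c_j S_j^{j+2}$. Finally I would verify positivity of $a_j, b_j, c_j$, which is immediate from $q_i>0$.

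The main obstacle I anticipate is purely bookkeeping: there are a large number of bilinear terms $(X_m-X_{m-1})\cdot(X_n-X_{n-1})^\perp$ to keep track of, with indices ranging over $\{j-2,\dots,j+2\}$, and one must correctly identify each with the appropriate oriented-area symbol $S_\ell$ or $S_\ell^k$ and merge like terms. A secondary subtlety is the algebraic identity relating the ``local'' perpendicular products to oriented areas when the two edges are not adjacent — for instance rewriting a product like $(X_{j+2}-X_{j+1})\cdot(X_j-X_{j-1})^\perp$ in terms of $S_j^{j+2}$ requires using the collinearity-free decomposition and the bilinearity of $\mathrm{Area}$; here one uses relations of the type $\mathrm{Area}(A,B,D) = \mathrm{Area}(A,B,C) + \mathrm{Area}(A,C,D) - \mathrm{Area}(B,C,D)$ or, more directly, that $(X_k - X_j)\cdot(X_j-X_{j-1})^\perp = 2S_j^k$. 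Once the dictionary between perpendicular products and area symbols is fixed, the computation is mechanical, and the claimed identity \eqref{DSt} follows by matching coefficients on both sides.
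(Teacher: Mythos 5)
Your proposal is correct and follows essentially the same route as the paper: differentiate the explicit formula $2S_j=(X_{j+1}-X_j)\cdot(X_j-X_{j-1})^\perp$, substitute the lumped-mass ODE \eqref{Semidiscrete, lumped mass} for $\dot X_{j-1},\dot X_j,\dot X_{j+1}$, and identify the resulting bilinear brackets with the oriented areas $S_j$, $S_{j+1}^{j-2}$, $S_j^{j+2}$ via $(X_k-X_j)\cdot(X_j-X_{j-1})^\perp=2S_j^k$, exactly as the paper does in its $J_1+J_2+J_3$ decomposition. The bookkeeping you flag is indeed the only real work, and your dictionary between perpendicular products and area symbols is the one the paper uses.
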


\begin{proof}
By definition, it can be observed
\[S_j=\frac{1}{2}(X_{j+1}-X_j)\cdot
(X_j-X_{j-1})^\perp=\frac{q_j q_{j+1}}{2}\mathcal{T}_{j+1}\cdot \mathcal{N}_j.\]
Employing the flow equation \eqref{Semidiscrete, lumped mass}, we derive
\begin{align*}
\frac{\d}{\d t}S_j&	=\frac{1}{2}\frac{\d}{\d t}\l(X_{j+1}\cdot X_j^\perp+X_{j}\cdot X_{j-1}^\perp-X_{j+1}\cdot X_{j-1}^\perp \r)\\
	&=\frac{1}{2}\frac{\d X_{j+1}}{\d t}\cdot(X_{j}^\perp-X_{j-1}^\perp )-\frac{1}{2}\frac{\d X_{j}}{\d t}\cdot(X_{j+1}^\perp-X_{j-1}^\perp ) +\frac{1}{2}\frac{\d X_{j-1}}{\d t}\cdot(X_{j+1}^\perp-X_{j}^\perp)\\
	&\triangleq  :J_1+J_2+J_3,
	\end{align*}
where we have used the property $u\cdot v^\perp=-v\cdot u^\perp$ for any $u ,v\in \mathbb{R}^2$. Applying \eqref{Semidiscrete, lumped mass}, one can calculate $J_j$ as
\begin{align*}
&\quad\l(q_{j+1}+q_{j+2}\r)J_1=
\frac{q_j}{2}(q_{j+1}+q_{j+2})\dot X_{j+1}\cdot\mathcal{N}_j\\	&=q_j\Big(\mathcal{T}_{j+2}-\mathcal{T}_{j+1}-\frac{\pi}{L_h}
\big(X_{j+2}-X_j\big)^\perp\Big)\cdot \mathcal{N}_j\\
&=\frac{X_{j+2}-X_j}{q_{j+2}}\cdot q_j \mathcal{N}_j-\Big(1+\frac{q_{j+1}}{q_{j+2}}\Big)\mathcal{T}_{j+1}\cdot
q_j\mathcal{N}_j-\frac{\pi}{L_h}(X_{j+2}-X_{j})\cdot q_j\mathcal{T}_j\\
&=\frac{2}{q_{j+2}}S_j^{j+2}-\Big(\frac{2}
{q_{j+1}}+\frac{2}{q_{j+2}}\Big)S_j-\frac{\pi}{L_h}(X_j-X_{j-1})\cdot(X_{j+2}-X_{j}).
\end{align*}
Similarly one easily gets
\begin{align*}
&\l(q_{j}+q_{j+1}\r)J_2
=-\Big(\frac{2}{q_j}+\frac{2}{q_{j+1}}\Big)S_j+\frac{\pi}{L_h} \l|X_{j+1}-X_{j-1}\r|^2,\\
&\l(q_{j-1}+q_j\r)J_3 =-\Big(\frac{2}{q_{j-1}}+\frac{2}{q_j}\Big)S_j+\frac{2}{q_{j-1}}S_{j+1}^{j-2}	+\frac{\pi}{L_h}(X_j-X_{j+1})\cdot(X_j-X_{j-2}).
\end{align*}
Combining the above equations together yields \eqref{DSt} immediately.
\end{proof}

\smallskip

Now we turn to the proof of convexity-preservation.

\emph{Proof of Theorem \ref{Preservation of convexity}.}
Define the function $F(t):=\min\{S_j(t),\,\, 1\le j\le N\}$. It follows from the assumption that $F(0)>0$. By the definition of $N$-polygon and Lemma \ref{Convexity lemma} (iii), it suffices to show $F(t)>0$, for $t\in (0,T]$.

We argue by contradiction. Suppose the contrary, by continuity  there exists the smallest time $0<t_0\le T$ such that $F(t_0)= 0$. Then we have
\begin{enumerate}
	\item[(1)] there exists some triangle such that the oriented area achieves zero at $t_0$, without loss of generality, we may assume $S_2(t_0)=\mathrm{Area}(X_1,X_2,X_3)\ (t_0)=0$;

\item[(2)] the $N$-polygon $\mathcal{P}(t)=(X_1(t), \ldots, X_N(t))$ is convex for $0\le t<t_0$, hence by Lemma \ref{Convexity lemma} (iv), it holds
\[
S_j^k(t)>0,\quad  \forall\ 0\le t<t_0,\quad \forall\ j=1,\ldots,N,\ \forall\ k\neq j-1,j.\]
\end{enumerate}
Thus by \eqref{DSt}, one has
\be\label{ds2}
0\ge \frac{\d}{\d t}S_2(t_0)=b_2\cdot S_{3}^0(t_0)+c_2\cdot S_2^{4}(t_0)+Q(t_0),
\ee
where
\begin{align*}
Q(t_0)&=\frac{1}{q_{2}+q_{3}}\frac{\pi}{L_h}|X_{3}-X_{1}|^2 (t_0)+\frac{1}{q_{1}+q_{2}}\frac{\pi}{L_h}(X_2-X_{3})\cdot(X_2-X_{0})\ (t_0)\\
	&\quad +\frac{1}{q_{3}+q_{4}}\frac{\pi}{L_h}(X_2-X_{1})\cdot(X_{2}-X_{4})\ (t_0).
	\end{align*}
Noticing that (1) implies that $X_1(t_0), X_2(t_0), X_3(t_0)$ are collinear. There are two possibilities: (i) $(X_2-X_1)\cdot (X_3-X_2)(t_0)>0$; (ii) $(X_2-X_1)\cdot (X_3-X_2)(t_0)<0$. Next we discuss it case by case.

\noindent\textbf{Case (i): $(X_2-X_1)\cdot (X_3-X_2)(t_0)>0$.}

Firstly by (2) and continuity, we easily find that $S_3^0(t_0)\ge 0$, $S_2^4(t_0)\ge 0$.
We claim that $Q(t_0)\ge 0$. Actually, notice that in this case it holds
\[|X_3-X_1|(t_0)=|X_3-X_2|(t_0)+|X_2-X_1|(t_0)=q_2(t_0)+q_3(t_0),\]
 which implies
	\begin{align*}
\frac{\pi}{L_h}\frac{|X_{3}-X_{1}|^2}{q_{2}+q_{3}}\ (t_0)=\frac{\pi}{L_h}\frac{(q_2+q_3)^2}{q_{2}+q_{3}}\ (t_0)=\frac{\pi}{L_h}\l(q_2+q_3\r)\ (t_0).
	\end{align*}
On the other hand, by the triangle inequality, one can estimate
	\begin{align*}
\frac{1}{q_{1}+q_{2}}\frac{\pi}{L_h}(X_2-X_{3})\cdot(X_2-X_{0})
	&\ge -\frac{1}{q_{1}+q_{2}}\frac{\pi}{L_h}\l(q_3\cdot (q_1+q_2)  \r)= -\frac{\pi}{L_h}q_3,\\
\frac{1}{q_{3}+q_{4}}\frac{\pi}{L_h}(X_2-X_{1})\cdot(X_{2}-X_{4})
		&\ge  -\frac{\pi}{L_h}q_2.
	\end{align*}
Thus
\[Q(t_0)\ge
		\frac{\pi}{L_h}\l(q_2+q_3\r)(t_0)-\frac{\pi}{L_h}q_3(t_0)-
\frac{\pi}{L_h}q_2(t_0)=0.
\]
Recalling \eqref{ds2}, all above inequalities become equalities, i.e., $S_{3}^{0}(t_0)=S_2^4(t_0)=0$, and
\[|X_2-X_0|(t_0)=q_1(t_0)+q_2(t_0),\quad |X_2-X_4|(t_0)=q_2(t_0)+q_3(t_0).\]
This means $X_0,X_1,X_2,X_3,X_4$ are collinear at $t_0$ and are arranged in order, i.e.,
$(X_{j+1}-X_j)(t_0)=d_j(X_1-X_0)(t_0)$ with $d_j>0$, $j=1,2,3$.  In particular,
\[S_3(t_0)=0,\quad (X_3-X_2)\cdot (X_4-X_3)(t_0)>0.\]
Repeating the above procedure by another $N-4$ times, we get that at time $t_0$, all vertices $(X_0,X_1,\ldots,X_N)$ are collinear and are arranged in order, i.e.,
\[X_{j+1}(t_0)-X_j(t_0)=d_j(X_1(t_0)-X_0(t_0)),\quad d_j>0,\quad j=1, 2, \ldots, N-1,\]
which contradicts with the periodic condition $X_0=X_N$.
\medskip

\noindent\textbf{Case (ii): $(X_2-X_1)\cdot (X_3-X_2)(t_0)<0$.}

This means $(X_3-X_2)(t_0)=d_2(X_2-X_1)(t_0)$ with $d_2<0$. By (2) and continuity, we have
$S_3(t_0)\ge 0$, $S_2^4(t_0)\ge 0$. On the other hand, by definition, one finds
\begin{align*}
S_3(t_0)&=\frac{1}{2}(X_4-X_3)\cdot (X_3-X_2)^\perp(t_0)=\frac{1}{2}(X_4-X_2)\cdot (X_3-X_2)^\perp(t_0)\\
&=\frac{d_2}{2}(X_4-X_2)\cdot (X_2-X_1)^\perp(t_0)=d_2 S_2^4(t_0)\le 0.
\end{align*}
It follows $S_3(t_0)=0$ and $X_1, X_2, X_3, X_4$ are collinear.
We claim that
\[(X_4-X_3)\cdot (X_3-X_2)(t_0)<0.\]
 Otherwise, Case (i) happens for the collinear points $X_2, X_3, X_4$. Differentiating $S_3$ at $t_0$ and repeating the arguments as in Case (i) involving $S_3$ will lead to the conclusion that $X_1, X_2, X_3, X_4, X_5$ are collinear and are arranged in order, which contradicts with the premise that $(X_2-X_1)\cdot (X_3-X_2)(t_0)<0$. Thus it holds
 \[S_3(t_0)=0,\quad (X_4-X_3)\cdot (X_3-X_2)(t_0)<0.\]
 Repeating this argument, we can conclude that all vertices $X_1,X_2,\ldots,X_N$ are collinear, furthermore, every three adjacent vertices are interlaced, i.e.,
 \[
 (X_{j+1}-X_j)\cdot (X_j-X_{j-1})(t_0)<0,\quad j=1,\ldots,N.
 \]
In particular, all exterior angles of the polygon $\cP(t_0)$ are $\pi$. On the other hand, noticing each exterior angle $\alpha_j$ is continuous, by continuity and convexity, we have
\[
N\pi=\sum_{j=1}^N\alpha_j(t_0)=\lim_{t\rightarrow t_0}\sum_{j=1}^N\alpha_j(t)=2\pi,
\]
which leads to a contradiction since $N\ge 3$.
$\square\hfill$

\begin{remark}
 A similar argument holds for Dziuk's semi-discrete scheme \cite{Dziuk1994} for the CSF. More precisely, under nondegeneration of vertices ($q_j>0$) we can first  prove that if the initial polygon is convex, then the evolved polygon under the semi-discrete scheme  of the  CSF is also convex unless all vertices are collinear at some $t_0>0$, in which case the area vanishes, i.e., Area$(\cP(t_0))=0$. On the other hand, applying the error estimate of the scheme for the CSF \cite{Dziuk1994,Dziuk1999}, we arrive at
 \begin{align*}
 	\l|\mathrm{Area}(\Gamma_{t_0})-\mathrm{Area}(\cP(t_0))\r|
 	&=\Big|\int_{\S^1}\p_\xi x\cdot y\ \d \xi -\int_{\S^1}\p_\xi x_h\cdot y_h\ \d \xi  \Big|\\
 	&\le \int_{\S^1}|\p_\xi x-\p_\xi x_h |\cdot |y|\ \d \xi+ \int_{\S^1} |\p_\xi x_h|\cdot |y-y_h|\ \d \xi\\
 	&\le C\sup_{[0,T]}\|X-X_h\|_{H^1(\S^1)}\le Ch,
 \end{align*}
 where $\Gamma_{t}$ represents the real curve driven by the AP-CSF.
 This implies that $\mathrm{Area}(\cP(t_0))$ stays away from zero, if $\mathrm{Area}(\Gamma_t)$ has a positive lower bound for $t\in (0,T]$ and $h$ is small enough. This leads to a contradiction!
\end{remark}

\subsection{Proof of Theorem \ref{Perimeter decreasing property}}

\

\smallskip
\begin{proof}
Applying \eqref{dq1}, we get the derivative of the perimeter
\begin{align*}
	\frac{\d }{\d t}L_h
	&=\sum_{j=1}^N\frac{\d }{\d t}q_j =\sum_{j=1}^N\Big(-\frac{1}{q_j+q_{j+1}}|\cT_{j+1}-\cT_{j}|^2-\frac{1}{q_j+q_{j-1}}|\cT_{j-1}-\cT_{j}|^2 \Big)\\
	&\quad + \frac{2\pi}{L_h}\sum_{j=1}^N\Big(-\frac{q_{j+1}}{q_j+q_{j+1}} \cT_j\cdot \cN_{j+1}+\frac{q_{j-1}}{q_j+q_{j-1}}\cT_j\cdot \cN_{j-1} \Big)\\
&=-2\sum_{j=1}^N \frac{|\cT_{j+1}-\cT_{j}|^2}{q_j+q_{j+1}} -\frac{2\pi}{L_h}\sum_{j=1}^N\cT_j\cdot \cN_{j+1},
\end{align*}
where we have used the fact that
\begin{align*}
		&\sum_{j=1}^N\Big(-\frac{q_{j+1}}{q_j+q_{j+1}} \cT_j\cdot \cN_{j+1}+\frac{q_{j-1}}{q_j+q_{j-1}}\cT_j\cdot \cN_{j-1} \Big)\\
		%=&\ \sum_{j=1}^N\Big(-\frac{q_{j+1}}{q_j+q_{j+1}} \cT_j\cdot \cN_{j+1}+\frac{q_j}{q_j+q_{j+1}} \cT_{j+1}\cdot \cN_{j} \Big)\\
		=&\ \sum_{j=1}^N\Big(-\frac{q_{j+1}}{q_j+q_{j+1}} \cT_j\cdot \cN_{j+1}-\frac{q_j}{q_j+q_{j+1}} \cN_{j+1}\cdot \cT_{j} \Big)= -\sum_{j=1}^N\cT_j\cdot \cN_{j+1}.
	\end{align*}
We denote $\alpha_j$ by the exterior angle of the polygon at $X_j$. By Theorem \ref{Preservation of convexity}, $\cP(t)$ keeps convex for all $t$, which implies $0<\alpha_j<\pi$ for $j=1,\ldots,N$, and $\sum\limits_{j=1}^N \alpha_j=2\pi$.
Direct computations yield
\begin{align*}
	\l|\cT_{j+1}-\cT_j \r|^2=2-2\cos \alpha_j =4\sin^2(\alpha_j/2),\quad \text{and}\quad \cT_j\cdot \cN_{j+1}=-\sin \alpha_j.
\end{align*}
By Cauchy-Schwarz inequality, one easily gets
\begin{align*}
	\sum_{j=1}^N2\sin\l( \frac{\alpha_j }{2}\r)
	&\le \Big(\sum_{j=1}^N\frac{4\sin^2\l(\frac{\alpha_j }{2}\r)}{q_j+q_{j+1}} \Big)^{\frac12} \Big(\sum\limits_{j=1}^N q_j+q_{j+1} \Big)^{\frac12}= \l(2L_h\r)^{\frac12} \Big(\sum_{j=1}^N\frac{4\sin^2\l( \frac{\alpha_j }{2} \r)}{q_j+q_{j+1}} \Big)^{\frac12}.
\end{align*}
Hence we derive
\begin{align*}
	\frac{\d L_h}{\d t}	
	&=-2\sum\limits_{j=1}^N\Big(\frac{4\sin^2\big(\frac{\alpha_j }{2} \big)}{q_j+q_{j+1}}-\frac{\pi}{L_h}\sin\alpha_j  \Big)
	%&\le -2\l(\frac{\l(\sum\limits_{j=1}^N2\sin\l( \frac{\alpha_j }{2}\r)\r)^{2}}{2L_h}-\frac{\pi}{L_h}\sum_{j=1}^N\sin\alpha_j\r)\\
\le\frac{-2}{L_h}\Big(2 \big(\sum_{j=1}^N\sin\big( \frac{\alpha_j }{2}\big)\big)^{2}-\pi \sum_{j=1}^N\sin\alpha_j\Big)\le 0,
\end{align*}
where in the last inequality we have utilized  a trigonometric inequality (cf. Lemma \ref{A trigonometric  lemma} below) and the proof is completed.
\end{proof}

\smallskip

\begin{lemma}\label{A trigonometric  lemma}
Define
\[
f_N(\beta_1,\ldots,\beta_N):=\Big(\sum_{j=1}^N\sin \beta_j  \Big)^2-\frac{1}{2}\Big(\sum\limits_{j=1}^N\beta_j\Big) \Big(\sum\limits_{j=1}^N\sin(2\beta_j)\Big),\quad 0\le \beta_j\le \frac{\pi}{2}.
\]
Then it holds $f_N(\beta_1,\ldots,\beta_N)\ge 0$.
	
\end{lemma}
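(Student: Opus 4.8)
The plan is to prove Lemma~\ref{A trigonometric lemma} by reducing the $N$-variable inequality to an essentially one-variable statement. First I would observe that, since $\sin$ is concave on $[0,\pi/2]$ while the term $\sum_j \beta_j \sum_j \sin(2\beta_j)$ couples the variables, the natural first move is to fix $S:=\sum_{j=1}^N \beta_j$ and study the extremal configurations of $f_N$ on the simplex $\{\beta_j\ge 0,\ \sum_j\beta_j=S\}$. Because $\sum_j \sin\beta_j$ is maximized and $\sum_j \sin(2\beta_j)$ is controlled under spreading/merging of the $\beta_j$, I expect that it suffices to treat the case where at most one $\beta_j$ is in $(0,\pi/2)$ and the rest are $0$ or $\pi/2$; alternatively, a cleaner route is an induction on $N$.

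For the induction, the base case $N=1$ reads $\sin^2\beta_1 \ge \tfrac12\beta_1\sin(2\beta_1) = \beta_1\sin\beta_1\cos\beta_1$, i.e. $\tan\beta_1 \ge \beta_1$ on $[0,\pi/2)$ (with the endpoint $\pi/2$ handled by continuity/limit), which is classical. For the inductive step I would try to show $f_N(\beta_1,\dots,\beta_N)\ge f_{N-1}(\beta_1,\dots,\beta_{N-2},\beta_{N-1}+\beta_N)$ whenever $\beta_{N-1}+\beta_N \le \pi/2$, and separately dispatch the case where two or more of the $\beta_j$ sum to more than $\pi/2$. The merging inequality amounts to
\[
\Big(\sum_{j\le N-2}\sin\beta_j + \sin\beta_{N-1}+\sin\beta_N\Big)^2 - \Big(\sum_{j\le N-2}\sin\beta_j + \sin(\beta_{N-1}+\beta_N)\Big)^2 \ge \frac{S}{2}\big(\sin 2\beta_{N-1}+\sin 2\beta_N - \sin 2(\beta_{N-1}+\beta_N)\big),
\]
where $S=\sum_j\beta_j$; using $\sin\beta_{N-1}+\sin\beta_N \ge \sin(\beta_{N-1}+\beta_N)$ (valid since both angles are in $[0,\pi/2]$ so their sine sum dominates) the left side is bounded below by a manageable product, while the right side factors via sum-to-product formulas, reducing everything to elementary estimates on $[0,\pi/2]$.

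The main obstacle I anticipate is controlling the coefficient $S=\sum_j\beta_j$ in the second term: it grows with $N$, so the crude bound $\sin(2\beta_j)\le 2\beta_j$ is not by itself enough, and one must genuinely exploit that the angles are capped at $\pi/2$ and that $f_N$ is being compared against the \emph{same} total $S$ on both sides of a merge. A careful bookkeeping of which configuration is extremal — likely ``all mass concentrated in as few variables as possible'' — is where the real work lies; once the extremal configuration is pinned down, the inequality collapses to the one-variable case $\tan\beta\ge\beta$ plus the trivial observation that adding a $\beta_j=\pi/2$ variable increases $\sum_j\sin\beta_j$ by $1$ while increasing $S$ by $\pi/2$ and $\sum_j\sin(2\beta_j)$ by $0$, which only helps. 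I would therefore structure the final write-up as: (1) reduce to the extremal configuration by a convexity/rearrangement argument at fixed $S$; (2) in that configuration verify the inequality by the $\tan\beta\ge\beta$ estimate; (3) conclude.
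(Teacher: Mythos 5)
Your base case coincides with the paper's ($f_1(\beta)=\sin\beta\cos\beta(\tan\beta-\beta)\ge 0$), but the inductive step you propose does not work. The merging inequality $f_N(\beta_1,\dots,\beta_N)\ge f_{N-1}(\beta_1,\dots,\beta_{N-2},\beta_{N-1}+\beta_N)$ is false already in the simplest admissible case: for $N=2$ and $\beta_1=\beta_2=\pi/4$ (so $\beta_1+\beta_2=\pi/2$) one has
\[
f_2\l(\tfrac{\pi}{4},\tfrac{\pi}{4}\r)=(\sqrt2)^2-\tfrac12\cdot\tfrac{\pi}{2}\cdot 2\sin\tfrac{\pi}{2}=2-\tfrac{\pi}{2}\approx 0.43,
\qquad f_1\l(\tfrac{\pi}{2}\r)=1,
\]
so merging \emph{increases} $f$ and the claimed inequality fails. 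The same example shows your heuristic about the extremal configuration is backwards: at fixed $S=\sum_j\beta_j$ the function is smaller for spread-out angles than for concentrated ones (indeed $f_2(\beta,\pi/2-\beta)=1+(1-\pi/2)\sin 2\beta$ is minimized at $\beta=\pi/4$, not at the endpoints), so reducing to ``all mass concentrated in as few variables as possible'' would bound the minimum from the wrong side. The case in which pairs of angles sum to more than $\pi/2$ is also left entirely unaddressed, and the observation about appending a $\beta_j=\pi/2$ variable goes in the unhelpful direction for a lower bound.

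The paper's induction removes a variable rather than merging two: one computes $\partial f_N/\partial\beta_N$ and shows it is nonnegative on $[0,\pi/2]^N$, using $\cos\beta_N\ge\cos^2\beta_N$, the identity $2\cos^2\beta_N=1+\cos(2\beta_N)$ together with $\sin\beta_j-\beta_j\le 0$ and $\cos(2\beta_N)\le 1$, and finally the elementary one-variable bound $2\sin\beta-\sin\beta\cos\beta-\beta\ge 0$ on $[0,\pi/2]$ (proved by noting its derivative is $2\cos\beta-2\cos^2\beta\ge 0$ and it vanishes at $0$). This yields $f_N(\beta_1,\dots,\beta_N)\ge f_N(\beta_1,\dots,\beta_{N-1},0)=f_{N-1}(\beta_1,\dots,\beta_{N-1})$, closing the induction. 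If you want to repair your write-up, this coordinatewise monotonicity is the statement to aim for; the merge-two-variables reduction should be abandoned.
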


\begin{proof}
	We prove $f_N(\beta_1,\ldots,\beta_N)\ge 0$ by induction. For $N=1$, we have
\[
	f_1(\beta_1)=\sin^2 \beta_1-\frac{\beta_1}{2}\cdot \sin(2\beta_1)=\sin\beta_1\cos\beta_1\l(\tan\beta_1-\beta_1 \r)\ge 0.
\]
Now suppose $f_{N-1}(\beta_1,\ldots,\beta_{N-1})\ge 0$, we first compute
\begin{align*}
	\frac{\p f_N (\beta_1,\ldots,\beta_N)}{\p \beta_N}
	&=2\cos\beta_N\cdot \sum_{j=1}^N\sin \beta_j-\frac{1}{2}\sum_{j=1}^N\sin(2\beta_j)-\sum_{j=1}^N\beta_j\cdot\cos(2\beta_N)\\
	&\ge 2\cos^2\beta_N\cdot \sum_{j=1}^N\sin \beta_j-\sum_{j=1}^N\sin\beta_j\cos\beta_j-\sum_{j=1}^N\beta_j\cdot\cos(2\beta_N)\\
	%&=(1+\cos(2\beta_N))\cdot \sum_{j=1}^N\sin \beta_j-\sum_{j=1}^N\sin\beta_j\cos\beta_j-\sum_{j=1}^N\beta_j\cdot\cos(2\beta_N)\\
	&= \sum_{j=1}^N\l(\sin \beta_j-\sin\beta_j\cos\beta_j-\l(\beta_j-\sin\beta_j \r)\cdot\cos(2\beta_N)\r)\\
	&\ge \sum_{j=1}^N\l(\sin \beta_j-\sin\beta_j\cos\beta_j-\l(\beta_j-\sin\beta_j \r)\r)=:\sum_{j=1}^N B_j(\beta_j).
\end{align*}
Noticing
\[
	\frac{\p B_j}{\p \beta_j}=\cos\beta_j-\cos^2\beta_j+\sin^2\beta_j-(1-\cos\beta_j)
=2\cos\beta_j-2\cos^2\beta_j\ge  0,\]
this implies $B_j$ is increasing and particularly,
\[
	B_j(\beta_j)\ge B_j(0)=0,\quad \beta_j\in [0, \pi/2],\quad j=1,\ldots,N.
\]
Hence one gets $\frac{\p f_N (\beta_1,\ldots,\beta_N)}{\p \beta_N}\ge 0$, and by induction,
\[
	f_N(\beta_1,\ldots,\beta_N)\ge f_N(\beta_1,\ldots,\beta_{N-1},0)=f_{N-1}
(\beta_1,\ldots,\beta_{N-1})\ge  0,\]
which completes the proof.
\end{proof}

\section{Proof of Theorem \ref{Error estimate}}
In this section we present the error estimate by following the lines of Dziuk's argument \cite{Dziuk1999} and Pozzi-Stinner's computation \cite{Pozzi-Stinner}. We establish the stability  estimate and length element difference under the assumption of boundedness of the semi-discrete length element. Then a bound of the semi-discrete length element is given. All above preliminary estimates together with the continuity argument enable us to derive
the desired error bound. Throughout this section, we suppose  Assumptions 2.1 and 2.2 are always valid and we denote $C>0$ by a general constant and may change from line to line. For simplicity we omit the space whenever the norm is defined on $\S^1$.

We first give the stability  estimate.
\begin{lemma}\label{Consistency estimate lemma}
Suppose further the solution of \eqref{Semidiscrete, weak} satisfies
  \begin{equation}\label{Control assumption, consistency estimate}
  	\inf _{\xi}\left|\partial_{\xi} X_{h}\right|\ge c_0>0,\quad \text{and}\quad \sup_{\xi}\left|\partial_{\xi} X_{h}\right|\le C_0,\quad \forall\ 0\le t\le T^*\le T.
  \end{equation}
 Then for any $t\in [0,T^*]$, we have
  \begin{equation}\label{cons}
  \int_{0}^{t} \int_{\S^{1}}|\p_t X-\p_t X_h|^{2}q_h \d \xi \d s +\sup\limits_{0\le s\le t}\int_{\S^{1}}|\cT-\cT_h|^{2}q_h\d \xi\leq C \int_{0}^{t} \|q-q_h\|_{L^2}^{2}\d s +C h^{2},
    \end{equation}
where $\cT=\frac{\partial_{\xi} X}{\left|\partial_{\xi} X\right|},\cT_h=\frac{\partial_{\xi} X_{h}}{\left|\partial_{\xi} X_{h}\right|}$, $q= |\p_\xi X|$, $q_h=|\p_\xi X_h |$ and $C$ depends on $C_p$, $C_P$, $\kappa_1$, $c_0$, $C_0$ and $K(X)$.
	
\end{lemma}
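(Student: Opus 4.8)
The plan is to adapt the energy method of Dziuk~\cite{Dziuk1999} and Pozzi--Stinner~\cite{Pozzi-Stinner}. I would write the finite element error as $X-X_h=(X-I_hX)-e_h$ with $e_h:=X_h-I_hX\in V_h$; the interpolation part is controlled throughout by \eqref{Interpolation estimate} together with Assumption~2.2, the crucial point being that $X\in W^{1,\infty}([0,T],H^2)$ forces $\p_t X\in L^\infty([0,T],H^2)$, so $\|I_h\p_t X-\p_t X\|_{L^2}=O(h^2)$ and $\|\p_\xi(I_h\p_t X-\p_t X)\|_{L^2}=O(h)$. Subtracting \eqref{Semidiscrete, weak} from \eqref{AP-CSF, weak}, both tested against the same $v_h\in V_h$, gives an exact error identity, which I would test with $v_h=\p_t e_h=\p_t X_h-I_h\p_t X\in V_h$.

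Using $\p_\xi e_h=q_h\cT_h-q\cT-\p_\xi(I_hX-X)$ together with $|\cT|=|\cT_h|=1$, $\p_t\cT\perp\cT$ and $\p_t\cT_h\perp\cT_h$, a product-rule computation turns this choice into an energy balance of the form
\[
\tfrac12\tfrac{\d}{\d t}\!\int_{\S^1}\! q\,|\cT-\cT_h|^2\,\d\xi+\!\int_{\S^1}\! q_h|\p_t e_h|^2\,\d\xi+\tfrac{\mathbf h^2}{6}\!\int_{\S^1}\! q_h|\p_\xi\p_t e_h|^2\,\d\xi=\mathcal R,
\]
where the first two coercive terms come from the mass and mass-lumping terms of \eqref{Semidiscrete, weak} and the $\tfrac{\d}{\d t}$-term is the principal part of the tangent term $\int(\cT-\cT_h)\cdot\p_\xi\p_t e_h$. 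The remainder $\mathcal R$ collects: (a) $\int(q\p_t X-q_hI_h\p_t X)\cdot\p_t e_h$, handled by $|q\p_t X-q_hI_h\p_t X|\le q_h|I_h\p_t X-\p_t X|+|q-q_h|\,|\p_t X|$ ($\le\ep\int q_h|\p_t e_h|^2+C(h^4+\|q-q_h\|_{L^2}^2)$); (b) $-\tfrac12\int(\p_t q_h)|\cT-\cT_h|^2$; (c) $-\int(q-q_h)(\cT-\cT_h)\cdot\p_t\cT_h$; (d) $\int(\cT_h-\cT)\cdot\p_\xi(I_h\p_t X-\p_t X)$, $\le\ep\|\cT-\cT_h\|_{L^2}^2+Ch^2$; (e) $-\tfrac{\mathbf h^2}{6}\int q_h(\p_\xi I_h\p_t X)\cdot\p_\xi\p_t e_h$, absorbed by the third coercive term with an $O(h^2)$ leftover; and (f) the nonlocal difference $\int[\tfrac{2\pi}{L}(\p_\xi X)^\perp-\tfrac{2\pi}{L_h}(\p_\xi X_h)^\perp]\cdot\p_t e_h$, handled via $|L-L_h|\le\|q-q_h\|_{L^1}\le C\|q-q_h\|_{L^2}$ and $L_h=\int_{\S^1}q_h\,\d\xi\ge 2\pi c_0>0$ (from \eqref{Control assumption, consistency estimate}), giving $\ep\int q_h|\p_t e_h|^2+C(h^2+\|q-q_h\|_{L^2}^2+\|\cT-\cT_h\|_{L^2}^2)$.

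Granting suitable bounds on (b) and (c), one is left with $\tfrac{\d}{\d t}\int q|\cT-\cT_h|^2+c\int q_h|\p_t e_h|^2+c\,\mathbf h^2\int q_h|\p_\xi\p_t e_h|^2\le C\!\int_{\S^1}q|\cT-\cT_h|^2+C\|q-q_h\|_{L^2}^2+Ch^2$ on $[0,T^*]$. Integrating in time, using $q_h|\cT-\cT_h|^2=(q_h/q)\,q|\cT-\cT_h|^2\le(C_0/\kappa_1)\,q|\cT-\cT_h|^2$ to recover the weighting of \eqref{cons}, noting $X_h(\cdot,0)=I_hX^0$ gives $\int q|\cT-\cT_h|^2|_{t=0}=O(h^2)$, and applying Gronwall's inequality then yields \eqref{cons}. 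The quantity $\|q-q_h\|_{L^2}^2$ is deliberately kept on the right-hand side: the test function $\p_t e_h$ provides no coercivity in $q-q_h$, so it cannot be absorbed here; it is estimated separately via the length-element evolution \eqref{Length element equation} and its discrete counterparts \eqref{dq1}--\eqref{dq2}, and the running hypothesis \eqref{Control assumption, consistency estimate} is finally removed by a continuity/bootstrap argument.

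I expect the main obstacle to be the remainders (b), (c) — and, structurally, (e) — all of which involve time derivatives of the \emph{discrete} geometry. Since $\p_t\cT_h=q_h^{-1}(\mathrm{Id}-\cT_h\otimes\cT_h)\p_\xi\p_t X_h$ and $\p_\xi\p_t X_h=\p_\xi I_h\p_t X+\p_\xi\p_t e_h$, terms (c) and (e) genuinely see $\p_\xi\p_t e_h$, which the inverse estimate \eqref{Inverse estimate} only controls with an $h^{-1}$ loss; this is precisely the role of the artificial mass-lumping term, whose coercivity $\mathbf h^2\|\p_\xi\p_t e_h\|_{L^2}^2$ exactly compensates the loss when invoked through a weighted Young inequality, the leftover being $\ep\,\mathbf h^2\|\p_\xi\p_t e_h\|_{L^2}^2+C(\|\cT-\cT_h\|_{L^2}^2+h^2)$ (one also exploits that $\p_\xi e_h$ is orthogonal to $\cT_h$ up to interpolation error). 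Term (b) is treated by inserting \eqref{dq2} for $\p_t q_h$ and using that $\dot X_j-R_j$ is tied to $\p_t e_h$ through \eqref{Semidiscrete, another formulation} and the approximation $R_j\approx R$ from \eqref{rrj}, so that its contribution is again bounded by a small multiple of the coercive quantities plus $C(\|\cT-\cT_h\|_{L^2}^2+h^2)$. Carrying out all of these absorptions uniformly in $h$ — with every constant depending only on $C_p,C_P,\kappa_1,c_0,C_0$ and $K(X)$, and no hidden $h^{-1}$ — is the technical heart of the proof, and is exactly what the hypothesis \eqref{Control assumption, consistency estimate} makes possible.
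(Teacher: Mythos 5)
Your overall strategy --- subtract \eqref{Semidiscrete, weak} from \eqref{AP-CSF, weak}, test with $\pm(I_h\p_t X-\p_t X_h)$, run a Gronwall argument for an energy controlling $\int|\cT-\cT_h|^2$ while deliberately leaving $\|q-q_h\|_{L^2}^2$ on the right-hand side, and treat the nonlocal term via $|L-L_h|\le C\|q-q_h\|_{L^2}$ together with $L_h\ge 2\pi c_0$ --- is exactly the paper's. Your terms (a), (d), (e), (f), the initial-data estimate and the weight-conversion remark all match the actual proof.

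The gap is in the principal tangent term. You build the energy with the \emph{continuous} weight, $\tfrac12\tfrac{\d}{\d t}\int q\,|\cT-\cT_h|^2$, which produces the remainders (b) $-\tfrac12\int(\p_t q_h)|\cT-\cT_h|^2$ and (c) $-\int(q-q_h)(\cT-\cT_h)\cdot\p_t\cT_h$ containing time derivatives of the \emph{discrete} geometry, and you propose to absorb their $\p_\xi\p_t X_h$ content into the mass-lumping coercivity $\mathbf h^2\int q_h|\p_\xi\p_t e_h|^2$. This does not close: Young's inequality gives, e.g., $\int|q-q_h|\,|\cT-\cT_h|\,|\p_\xi\p_t e_h|\,\d\xi\le\eps\,\mathbf h^2\int|\p_\xi\p_t e_h|^2\d\xi+C(\eps)\,\mathbf h^{-2}\int|q-q_h|^2|\cT-\cT_h|^2\d\xi$, and at this stage $|q-q_h|$ and $|\cT-\cT_h|$ are only known to be $O(1)$, so the leftover carries an uncontrollable factor $h^{-2}$ in front of $\|q-q_h\|_{L^2}^2$ (and the analogous treatment of (b) puts $h^{-2}$ in front of $\|\cT-\cT_h\|_{L^2}^2$, which turns the Gronwall factor into $e^{Ch^{-2}T}$). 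The paper avoids this entirely by weighting the energy with $q_h$: the exact identity $(\cT-\cT_h)\cdot(\p_\xi\p_t X-\p_\xi\p_t X_h)=\frac{\d}{\d t}\l[(1-\cT\cdot\cT_h)q_h\r]+\p_t q\,(1-\cT\cdot\cT_h)+(q_h-q)(\cT_h-\cT)\cdot\p_t\cT$ (this is the content of Dziuk's Lemma 5.1, which the paper cites) leaves remainders involving only $\p_t q$ and $\p_t\cT$, hence bounded by $C\|\p_\xi\p_t X\|_{L^\infty}\big(\int|\cT-\cT_h|^2q_h\,\d\xi+\|q-q_h\|_{L^2}^2\big)$ with constants depending only on $K(X)$. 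Your formulation is algebraically equivalent to this one (substitute $\cT\cdot\p_t\cT_h=-\frac{\d}{\d t}(1-\cT\cdot\cT_h)-\p_t\cT\cdot\cT_h$ into (c) and regroup the total derivatives), so the fix is a reorganization rather than a new idea --- but the route you describe, through inverse estimates and the mass-lumping term, fails. Relatedly, the paper never needs the coercivity of the mass-lumping term for any other estimate: $J_2$ is bounded by $Ch^2\|\p_t X\|_{H^1}^2$ on its own.
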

\smallskip
\begin{proof}
	We first notice that the boundedness of the length element will imply the boundedness of the perimeter. Indeed, by Assumption 2.2 and \eqref{Control assumption, consistency estimate}, one easily gets
 \begin{equation}\label{Control of perimeter}
 	2\pi \kappa_1\le  L\le 2\pi \kappa_2 ,\quad 2\pi c_0\le L_h\le 2\pi C_0,\quad \forall\ t\in [0,T^*].
 \end{equation}
  Recalling $\p_\xi X\neq 0$, $\p_\xi X_h\neq 0$, $|\mathcal{T}|=|\mathcal{T}_h|=1$, this enables us to write the following
\begin{equation}\label{Basic equality}
 	\begin{split}
 		 |\partial_\xi X-\partial_\xi X_h |^2
                &=|\partial_\xi X|^2+|\partial_\xi X_h|^2-2\partial_\xi X\cdot \partial_\xi X_h\\
                &=(q-q_h)^2+2qq_h-2qq_h\mathcal{T}\cdot\mathcal{T}_h\\
                &=(q-q_h)^2+qq_h(2-2\mathcal{T}\cdot\mathcal{T}_h)=
               (q-q_h)^2+qq_h |\mathcal{T}-\mathcal{T}_h|^2.
 	\end{split}
 \end{equation}
  Taking the difference between \eqref{AP-CSF, weak} and \eqref{Semidiscrete, weak}, we obtain the error equation
  \begin{align*}
  	&\int_{\S^1}\l( \left|\partial_{\xi} X\right| \p_t X -\left|\partial_{\xi} X_{h}\right| \p_t X_{h} \r)\cdot v_h  \mathrm{~d} \xi+\int_{\S^1} \l(\cT-\cT_h  \r)\cdot \partial_{\xi} v_h \mathrm{~d} \xi\\
  	&+\int_{\S^1}\Big(\frac{2\pi}{L}\l(\p_\xi X\r)^{\perp}-\frac{2\pi}{L_h}\l(\p_\xi X_h\r)^{\perp} \Big)\cdot v_h  \mathrm{~d} \xi
  =\int_{\S^1}\frac{\mathbf{h}^2|\p_\xi X_h|}{6}\p_\xi\p_t X_h\cdot \p_\xi v_h\mathrm{~d} \xi
  \end{align*}
 holds for any $v_h\in V_h$. Taking $v_h=I_h(\p_t X)-\p_t X_{h} \in V_h$ in the above equation yields
  \begin{align*}
  	&\quad\int_{\S^1}|\p_t X-\p_t X_{h} |^2q_h \d\xi +\int_{\S^1}\l(\cT-\cT_h  \r)\l(\p_\xi\p_t X- \p_\xi\p_t X_h\r) \d\xi\\
  	&=\int_{\S^1}\p_t X\cdot (q_h-q)\l(I_h\p_t X-\p_t X_{h} \r)\d\xi+\int_{\S^1}\frac{\mathbf{h}^2q_h}{6}\p_\xi\p_t X_h\cdot \p_\xi \l(I_h\p_t X-\p_t X_{h} \r)\d\xi \\
  	&\quad +\int_{\S^1}q_h\cdot(\p_t X-\p_t X_h)(\p_t X-I_h\p_t X) d\xi+\int_{\S^1}\l(\cT-\cT_h  \r)\cdot \l(\p_\xi\p_t X- \p_\xi I_h\p_t X\r)\d \xi \\
  	&\quad +\int_{\S^1}\frac{2\pi}{L}\l(\p_\xi X-\p_\xi X_h \r)^\perp\cdot \l(\p_t X_h-I_h\p_t X\r)\d\xi\\
  	&\quad + \int_{\S^1}\Big(\frac{2\pi}{L}-\frac{2\pi}{L_h} \Big)\l(\p_\xi X_h \r)^\perp \cdot \l(\p_t X_h-I_h\p_t X\r)\d \xi 	\triangleq:\ J_1+J_2+J_3+J_4+J_5+J_6.
  \end{align*}
  The estimates of the second term on the left side and $J_j$ for $1\le j\le 4$ can be found in \cite[Lemma 5.1]{Dziuk1999}, which read as
\begin{align*}
  &\quad\int_{\S^1}\l(\cT-\cT_h \r)\cdot\l(\p_\xi\p_t X- \p_\xi\p_t X_h\r)  \d\xi\\
&\ge \frac{\d}{\d t}\Big(\int_{\S^1}\l(1-\cT\cdot\cT_h\r)q_h  \d\xi\Big)-C\|\p_\xi\p_tX\|_{L^\infty}\Big(\int_{\S^1}|\cT-\cT_h|^2 q_h d\xi
+\|q-q_h\|_{L^2}^2\Big),\\
  &J_1\le \varepsilon \int_{\mathbb{S}^1}|\partial_t X-\partial_tX_h|^2 q_h \mathrm{d}\xi +C(\varepsilon)\|\partial_t X\|^2_{L^\infty}\int_{\mathbb{S}^1}\frac{(q-q_h)^2}{q_h}\mathrm{d}\xi+C\|q_h\|_{L^\infty}^2\|\partial_t X\|_{H^1}^2h^2\\
  &\quad \le \varepsilon\int_{\mathbb{S}^1}\left|\partial_t X-\partial_t X_{h}\right|^{2}q_h \mathrm{d} \xi+C(\varepsilon)\left\|\partial_t X\right\|_{L^{\infty}}^{2} \|q-q_h\|_{L^2}^{2} +Ch^2\|\partial_tX\|_{H^1}^2,\\
  &J_2\le \frac{1}{24}\|q_h\|_{L^\infty}\|\partial_t X\|^2_{H^1}h^2\le Ch^2\|\partial_t X\|^2_{H^1},\\
 &J_3 \le  \varepsilon \int_{\mathbb{S}^1}|\partial_t X-\partial_t X_h|^2q_h \mathrm{d} \xi+C(\varepsilon)\|q_h\|_{L^\infty} \|\partial_tX\|_{H^1}^2h^2\\
  &\quad  \le \varepsilon \int_{\mathbb{S}^1}|\partial_t X-\partial_t X_h|^2q_h \mathrm{d} \xi+C(\varepsilon)h^2 \|\partial_tX\|_{H^1}^2,\\
&J_4\le C\|\partial_t X\|_{H^2}\|\mathcal{T}-\mathcal{T}_h\|_{L^2}h\le C\int_{\mathbb{S}^1}|\mathcal{T}-\mathcal{T}_h|^2 q_h\mathrm{d}\xi+Ch^2\|\partial_t X\|_{H^2}^2,
  	\end{align*}
where $\varepsilon$ is a generic small positive constant which will be chosen later. It remains to estimate $J_5$ and $J_6$. For $J_5$, we decompose it as $J_5=J_{51}+J_{52}$ with
\begin{align*}
  	J_{51}
		&=\int_{\S^1}\frac{2\pi}{L}\l(\p_\xi X-\p_\xi X_h \r)^\perp\cdot \l(\p_t X-I_h\p_t X\r) \d\xi,\\
		J_{52}&=\int_{\S^1}\frac{2\pi}{L}\l(\p_\xi X-\p_\xi X_h \r)^\perp\cdot \l(\p_t X_h-\p_t X\r)  \d \xi.
  \end{align*}
Applying Assumption 2.2, \eqref{Control of perimeter}, \eqref{Basic equality} and the interpolation estimate \eqref{Interpolation estimate}, we derive
  \begin{align*}
		J_{51}&
		\le  C\int_{\S^1}\l| \p_\xi X -\p_\xi X_h\r|^2\ \d \xi +C \int_{\S^1}|\p_t X-I_h\p_t X|^2 \d \xi \\
		&= C\Big(\int_{\S^1}q q_h\l(\l|\cT-\cT_h \r|^2 +(q-q_h)^2 \r)\d \xi+\l\|\p_t X-I_h\p_t X\r\|_{L^2}^2\Big) \\
		&\le C\l\|\p_\xi X\r\|_{L^\infty}\int_{\S^1}\l|\cT-\cT_h \r|^2q_h \d \xi + C\int_{\S^1}(q-q_h)^2 \d \xi+Ch^2\|\p_t X\|_{H^1}^2,\\
		J_{52}
		&\le C(\varepsilon ) \int_{\S^1}\l| \p_\xi X -\p_\xi X_h\r|^2 \d \xi +\varepsilon  \int_{\S^1}\l|\p_t X_h-\p_t X \r|^2q_h\d \xi\\
		&= C(\varepsilon ) \int_{\mathbb{S}^1}qq_h|\mathcal{T}-\mathcal{T}_h|^2 +(q-q_h)^2 \mathrm{d}\xi+\varepsilon  \int_{\S^1}\l|\p_t X_h-\p_t X \r|^2q_h\d \xi\\
		&\le C(\varepsilon )\l\|\p_\xi X\r\|_{L^\infty}\int_{\S^1}\l|\cT-\cT_h \r|^2q_h \d \xi + C(\varepsilon )\|q-q_h\|_{L^2}^2+\varepsilon  \int_{\S^1}\l|\p_t X_h-\p_t X \r|^2q_h\d \xi.
	\end{align*}
Similarly we decompose $J_6=J_{61}+J_{62}$ with
\begin{align*}
		J_{61}
		&=\int_{\S^1}\Big(\frac{2\pi}{L}-\frac{2\pi}{L_h} \Big)\l(\p_\xi X_h \r)^\perp \cdot \l(\p_t X-I_h\p_t X\r)\d \xi, \\
	J_{62}&=\int_{\S^1}\Big(\frac{2\pi}{L}-\frac{2\pi}{L_h} \Big)\l(\p_\xi X_h \r)^\perp \cdot \l(\p_t X_h-\p_t X\r)\d \xi.
	\end{align*}
Noticing
\begin{equation}\label{Estimate of perimeter}
\l|L-L_h\r|\le \|q-q_h\|_{L^1} \le C\|q-q_h\|_{L^2},
\end{equation}
this together with \eqref{Control of perimeter}, \eqref{Control assumption, consistency estimate} and \eqref{Interpolation estimate} lead to
\begin{align*}
		J_{61}
		&\le C|L-L_h|^2+ C\|\p_t X-I_h\p_t X\|_{L^2}^2\le C \|q-q_h\|_{L^2}^2+Ch^2\|\p_t X\|_{H^1}^2,\\
		J_{62}
		&\le C\int_{\S^1}|L_h-L|q_h^{\frac12}|\p_t X_h-\p_t X|\d\xi\le C(\varepsilon )|L_h-L|^2+\varepsilon \int_{\S^1}q_h\l|\p_t X_h-\p_t X \r|^2\ \d \xi \\
  &\le C(\varepsilon)\|q-q_h\|_{L^2}^2+\varepsilon \int_{\S^1}q_h|\p_t X_h-\p_t X|^2\d \xi.
	\end{align*}
Combining the above inequalities, we obtain
  \begin{align*}
  	&\int_{\S^1}|\p_t X-\p_t X_{h} |^2q_h \d\xi +\frac{\d}{\d t}\int_{\S^1}\l(1-\cT\cdot\cT_h\r)q_h \d\xi
  	\le 4\eps \int_{\S^1}|\p_t X-\p_t X_{h} |^2q_h \d\xi\\
  &\qquad\quad+C(\eps)h^2\|\p_t X\|^2_{H^2}+C(\eps, K(X)) \|q-q_h\|_{L^2}^2+C(\eps, K(X)) \int_{\S^1}|\cT-\cT_h|^2q_h\d\xi.
  \end{align*}
 Choosing $\eps$ small enough, integrating both sides with respect to time from $0$ to $t$, noticing that
  \begin{align*}
  	&\quad\int_{\S^1}\l(1-\cT\cdot\cT_h \r)(0)q_h(0) \d\xi
  	= \frac{1}{2}\int_{\S^1}\l|\cT-\cT_h \r|^2(0)q_h(0)\d\xi\\
  	&\le C\int_{\S^1}|\p_\xi X-\p_\xi X_h |^2(0)\d \xi\le C\|\p_\xi(X-I_hX )(0) \|_{L^2}^2 \le Ch^2\|X^0\|_{H^2}^2,
  \end{align*}
we are led to the estimate \eqref{cons} with appropriate constant $C$ by applying Gronwall's inequality  and  Sobolev embedding $H^1(\S^1)\hookrightarrow L^\infty(\S^1)$.
\end{proof}

\smallskip

\begin{lemma}\label{Control lemma}
Suppose
   \[
    \int_{\S^{1}}\left|\cT-\cT_h\right|^{2}q_h\d \xi+\|q-q_h\|_{L^2}^{2}\leq C_1h^{2},\quad \forall\ t\in[0,T^*],
   \]
then  there exists a constant $h_0$ such that for any $0<h\le h_0$, we have
   \[
  \inf_{\xi}q_h\ge 3\kappa_1/4,\quad \text{and}\quad \sup_{\xi}q_h\le 3\kappa_2/2,\quad \forall\ t\in[0,T^*],
   \]
   where the constant $h_0$ depends on $C_1,C_p, C_P,\kappa_1, \kappa_2$ and $K(X)$.

\end{lemma}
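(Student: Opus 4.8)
The plan is to convert the $L^2$-type control on $\cT-\cT_h$ and $q-q_h$ into an $L^\infty$ bound on $q_h$ via an inverse estimate, after first extracting an $L^\infty$ bound on $q-q_h$ from the hypothesis, and then comparing with the exact length element $q=|\p_\xi X|$, which is uniformly bounded between $\kappa_1$ and $\kappa_2$ by Assumption 2.2. The key observation, already recorded in \eqref{Basic equality}, is that
\[
|\p_\xi X-\p_\xi X_h|^2=(q-q_h)^2+qq_h|\cT-\cT_h|^2,
\]
so the hypothesis (using $q\le\kappa_2$) gives $\|\p_\xi X-\p_\xi X_h\|_{L^2}^2\le C\,C_1 h^2$ for a constant $C$ depending on $\kappa_2$. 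Since $\p_\xi X_h$ is piecewise constant (hence lies in a finite element space to which the inverse estimate applies), and $\p_\xi(X-I_hX)$ is $O(h)$ in $L^2$ by \eqref{Interpolation estimate}, we can write $\p_\xi X_h-\p_\xi I_hX=(\p_\xi X_h-\p_\xi X)+\p_\xi(X-I_hX)$, which is $O(h)$ in $L^2$, and apply the inverse estimate $\|v_h\|_{L^\infty}\le Ch^{-1/2}\|v_h\|_{L^2}$ to $v_h=\p_\xi X_h-\p_\xi I_hX\in V_h$ (componentwise), obtaining $\|\p_\xi X_h-\p_\xi I_hX\|_{L^\infty}\le Ch^{1/2}$.

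Next I would compare with $\p_\xi X$ pointwise. On each interval $I_j$ one has $\p_\xi I_hX|_{I_j}=(X(\xi_j)-X(\xi_{j-1}))/h_j$, which by the mean value theorem (or a direct Taylor estimate using $X\in W^{1,\infty}([0,T],H^2)$) differs from $\p_\xi X$ at any point of $I_j$ by at most $Ch\|X\|_{H^2}\le Ch\,K(X)$ in $L^\infty$; here Assumption 2.1 ($\min_j h_j\ge C_p h$) ensures the quotient is controlled. Hence
\[
\|\p_\xi X_h-\p_\xi X\|_{L^\infty}\le\|\p_\xi X_h-\p_\xi I_hX\|_{L^\infty}+\|\p_\xi I_hX-\p_\xi X\|_{L^\infty}\le C(h^{1/2}+h)\le Ch^{1/2},
\]
with $C$ depending on $C_1,C_p,C_P,\kappa_2,K(X)$. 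Combining with $\kappa_1\le q=|\p_\xi X|\le\kappa_2$ and the reverse triangle inequality $|q_h-q|\le|\p_\xi X_h-\p_\xi X|$, we get $q_h\ge\kappa_1-Ch^{1/2}$ and $q_h\le\kappa_2+Ch^{1/2}$ uniformly in $\xi$ and in $t\in[0,T^*]$. Choosing $h_0$ so that $Ch_0^{1/2}\le\min\{\kappa_1/4,\kappa_2/2\}$ yields $\inf_\xi q_h\ge 3\kappa_1/4$ and $\sup_\xi q_h\le 3\kappa_2/2$ for all $0<h\le h_0$, with $h_0$ depending only on $C_1,C_p,C_P,\kappa_1,\kappa_2,K(X)$ as claimed.

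The main obstacle is the half-power loss: the inverse estimate turns an $O(h)$ $L^2$ bound into only an $O(h^{1/2})$ $L^\infty$ bound, so one genuinely needs $h$ small (rather than getting the conclusion for all $h$), and one must be careful that all constants entering $h_0$ are the admissible ones — in particular that the constant from the inverse estimate depends only on $C_p,C_P$ (via Lemma stated from \cite{Brenner-Scott}) and not on the unknown $X_h$. There is no circularity concern here because this lemma takes the $L^2$-smallness as a hypothesis; it is in the continuity argument of Section 4 that this will be fed back together with Lemma \ref{Consistency estimate lemma}. A minor point to check is that $q_h$ need not be continuous across nodes, but the bounds are established intervalwise and the constants are uniform in $j$, so taking the infimum and supremum over $\xi$ is legitimate.
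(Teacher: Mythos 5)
Your proof is correct and follows essentially the same route as the paper's: split $\p_\xi X-\p_\xi X_h$ through an interpolant, bound the finite-element part in $L^\infty$ via the inverse estimate applied to the $O(h)$ $L^2$ error coming from \eqref{Basic equality} and the hypothesis, and conclude from $\kappa_1\le q\le \kappa_2$ once $h\le h_0$. The only (harmless) imprecision is the claim that $\|\p_\xi I_hX-\p_\xi X\|_{L^\infty}=O(h)$: under the assumed $H^2$ regularity this is only $O(h^{1/2})$ (consistent with \eqref{Interpolation estimate}), which is still all your argument needs.
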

\smallskip
\begin{proof} Applying the triangle inequality, the interpolation error estimate \eqref{Interpolation estimate}, and the inverse estimate \eqref{Inverse estimate}, we can derive
	\begin{align*}
		\l\|\p_\xi X-\p_\xi X_h\r\|_{L^\infty}
		&\le \l\|\p_\xi X- I_h\p_\xi X\r\|_{L^\infty}+\l\|\p_\xi X_h- I_h\p_\xi X\r\|_{L^\infty}\\
		&\le Ch^{1/2}\|\p_\xi X\|_{H^1}+Ch^{-1/2}\l\|\p_\xi X_h-I_h\p_\xi X\r\|_{L^2}\\
		%&\le Ch^{1/2}\|X\|_{H^2}+Ch^{-1/2}\l(\l\|\p_\xi X_h-\p_\xi X\r\|_{L^2}+\l\|\p_\xi X- I_h \p_\xi X\r\|_{L^2} \r)\\
		&\le Ch^{1/2}\|X\|_{H^2}+Ch^{-1/2}\l\|\p_\xi X_h-\p_\xi X\r\|_{L^2}.
	\end{align*}
The assumption and equality \eqref{Basic equality} imply
  \[\|\p_\xi X-\p_\xi X_h\|_{L^2}
  \le \Big(\int_{\S^1}q q_h|\mathcal{T}-\mathcal{T}_h|^{2}\d \xi\Big)^{1/2}+\|q-q_h\|_{L^2}\le \sqrt{C_1}\big(1+\l\|\p_\xi X\r\|_{L^\infty}^{1/2}\big) h.
  \]
  Then it follows that
  \begin{align*}
  	\l\|\p_\xi X-\p_\xi X_h\r\|_{L^\infty} \le C\l( K(X),C_1\r)h^{1/2},
  \end{align*}
and the conclusion follows by recalling Assumption 2.2.
\end{proof}

\smallskip

In order to estimate the length element difference, we first give the following preliminary  lemma by following the lines of \cite[Lemma 3.2, Lemma 4.1]{Pozzi-Stinner}.
\begin{lemma}\label{Preparation lemma}
Given the assumptions of Lemma \ref{Consistency estimate lemma}, then there exists a constant $C$ depending on $\kappa_1,C_p,C_P, c_0,C_0,T$  such that the following estimates hold for $t\in [0,T^*]$:
  \begin{align}
  	|R-R_j| &\le C\l(|L-L_h|+|\cT-\cT_j|+ |\cT-\cT_{j+1}|\r),\quad j=1,\ldots, N,\label{Preparation estimate 1}\\
  	%|\p_t X-R| &\le C\l(1+K(X)\r),\label{Preparation estimate 2}\\
  	\int^t_0 (q_j+q_{j+1})&|\dot X_j-R_j |^2 \d s \le Ch,\quad j=1,\ldots, N,\label{Preparation estimate 3}
  \end{align}
  where $R$ and $R_j$ are defined as \eqref{rrj}.
  Moreover, we have the estimates on $I_j$:
  \begin{equation}\label{Preparation estimate 4}
		\begin{split}
	&\sum\limits_{k=j-1}^{j+1}\|\cT-\cT_k\|_{L^2(I_j)}^2\le Ch^2\|X\|^2_{H^2(S_j)} +C\|\cT-\cT_h \|_{L^2(S_j)}^2, \\
			&\sum\limits_{k=j-1}^j\|\p_t X-\dot X_k\|_{L^2(I_j)}^2\le Ch^2\|\p_tX\|^2_{H^1(I_j)}+C\|\p_t X-\p_t X_h\|_{L^2(I_j)}^2,\\
&\sum\limits_{k=j-1}^{j+1}\|qh_j-q_k\|_{L^2(I_j)}^2\le Ch^4\|X\|^2_{H^2(S_j)}+ Ch^2\|q-q_h\|_{L^2(S_j)}^2,
		\end{split}
	\end{equation}
with $S_j=I_j\cup I_{j+1}\cup I_{j-1}$.	
\end{lemma}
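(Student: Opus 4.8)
The plan is to prove the three families of estimates in Lemma~\ref{Preparation lemma} essentially by transferring bounds between the continuous quantities $\cT,\,q,\,\p_t X$ and their nodal/discrete counterparts $\cT_j,\,q_j,\,\dot X_j$, using the interpolation and inverse estimates of Lemma~1 together with the stability estimate \eqref{cons} already established in Lemma~\ref{Consistency estimate lemma}. First I would prove \eqref{Preparation estimate 1}: writing $R=-\frac{2\pi}{L}\cN$ and $R_j=-\frac{2\pi}{L_h}\frac{\cN_j q_j+\cN_{j+1}q_{j+1}}{q_j+q_{j+1}}$, I add and subtract $-\frac{2\pi}{L}\cdot(\text{the convex combination of }\cN_j,\cN_{j+1})$, so that one part is controlled by $|L-L_h|$ (using the uniform lower bounds $L,L_h\gtrsim 1$ from \eqref{Control of perimeter}) and the other part by $|\cN-\cN_j|+|\cN-\cN_{j+1}|$; since $(\,\cdot\,)^\perp$ is an isometry and $\cN=\cT^\perp$, $\cN_k=\cT_k^\perp$, this last quantity equals $|\cT-\cT_j|+|\cT-\cT_{j+1}|$. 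Here one must be slightly careful that $\cT$ in $R$ is evaluated at a point of $I_j$ while $\cT_j$ is the (constant) discrete tangent on $I_j$, but since $\cT_h$ restricted to $I_j$ is exactly $\cT_j$, this is simply the pointwise difference $\cT-\cT_h$ on $I_j$.

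Next I would establish \eqref{Preparation estimate 3}. The identity \eqref{Semidiscrete, another formulation} gives $\dot X_j-R_j=2(\cT_{j+1}-\cT_j)/(q_j+q_{j+1})$, hence $(q_j+q_{j+1})|\dot X_j-R_j|^2=4|\cT_{j+1}-\cT_j|^2/(q_j+q_{j+1})$. Using \eqref{Control assumption, consistency estimate} to bound $q_j+q_{j+1}$ from below (note $q_j/h_j=|\p_\xi X_h|_{I_j}|\ge c_0$ and $h_j\ge C_p h$, so $q_j+q_{j+1}\gtrsim h$), and then $|\cT_{j+1}-\cT_j|\le |\cT_{j+1}-\cT|+|\cT-\cT_j|$ evaluated at a common point, I bound the right-hand side by a constant times $h^{-1}$ times the local $L^2$-discrepancy of $\cT-\cT_h$ plus the local interpolation error; integrating in time and summing using \eqref{cons} gives a bound of order $h^{-1}\cdot h^2 = h$. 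The local-to-$L^2$ passage needs the fourth estimate family \eqref{Preparation estimate 4}, so in fact I would prove \eqref{Preparation estimate 4} first.

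For \eqref{Preparation estimate 4}: each discrete nodal quantity such as $\cT_k$, $\dot X_k$, $q_k/h_j$ is (up to the constant weights $h_j$) a finite-element/difference-quotient analogue of the corresponding continuous quantity, and on each interval the difference with the continuous quantity splits, via the triangle inequality, into (a) the difference between the continuous object and its piecewise-linear interpolant or the relevant average, controlled by the interpolation estimate \eqref{Interpolation estimate} with a factor $h$ or $h^2$, times $\|X\|_{H^2}$ or $\|\p_t X\|_{H^1}$ on the patch $S_j$; and (b) the difference between the interpolant of the exact solution and $X_h$, which is exactly $\|\cT-\cT_h\|_{L^2(S_j)}$, $\|\p_t X-\p_t X_h\|_{L^2(S_j)}$, or $\|q-q_h\|_{L^2(S_j)}$ on the patch, possibly after an inverse estimate. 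The powers of $h$ in the statement match: $h^2\|X\|_{H^2}^2$ for $\cT$, $h^2\|\p_t X\|_{H^1}^2$ for $\p_t X$, and $h^4\|X\|_{H^2}^2$ for $qh_j-q_k$ (the extra $h^2$ because $q_k=|X_k-X_{k-1}|$ carries a factor $h_j\sim h$, and $\|q-q_h\|_{L^2(S_j)}$ likewise enters with a factor $h^2$). I would carry these out one at a time, using Assumption~2.1 to control ratios $h_{j\pm1}/h_j$ by $1+O(h)$ when comparing across neighbouring intervals. The main obstacle I anticipate is the bookkeeping in \eqref{Preparation estimate 4}: one must be precise about which continuous point each discrete quantity is compared against (e.g.\ $\cT_k$ is the secant direction on $I_k$, so $\cT-\cT_k$ on $I_j$ for $k=j\pm1$ involves comparing $\cT$ on $I_j$ with a secant over an \emph{adjacent} interval, which requires a mean-value/Taylor argument absorbing an extra $\|X\|_{H^2(S_j)}$ term), and about reconciling the $h$-weighting conventions between $q=|\p_\xi X|$ and $q_k=|X_k-X_{k-1}|=h_k|\p_\xi X_h|_{I_k}$. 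Once \eqref{Preparation estimate 4} is in hand, \eqref{Preparation estimate 1} and \eqref{Preparation estimate 3} follow with only the short arguments sketched above.
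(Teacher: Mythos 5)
Your treatment of \eqref{Preparation estimate 1} coincides with the paper's (insert $\frac{2\pi}{L_h}\cN$ as the intermediate term and use the lower bounds on $L,L_h$ together with $|\cN-\cN_k|=|\cT-\cT_k|$), and your sketch of \eqref{Preparation estimate 4} is consistent with the argument the paper itself delegates to Pozzi--Stinner. The genuine gap is in your proof of \eqref{Preparation estimate 3}. Starting from $(q_j+q_{j+1})|\dot X_j-R_j|^2=4|\cT_{j+1}-\cT_j|^2/(q_j+q_{j+1})\le Ch^{-1}|\cT_{j+1}-\cT_j|^2$ and converting the constant $|\cT_{j+1}-\cT_j|^2$ to local $L^2$ norms costs another factor $h^{-1}$, so via the first line of \eqref{Preparation estimate 4} you arrive at $C\|X\|^2_{H^2(S_j)}+Ch^{-2}\|\cT-\cT_h\|^2_{L^2(S_j)}$. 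The first term is only $O(1)$ (Assumption 2.2 gives $\partial_\xi^2X\in L^2$, not $L^\infty$, so $\|X\|_{H^2(S_j)}$ need not be small even though $|S_j|=O(h)$), and after time integration you get $O(T)$, not $O(h)$. The second term is worse: you would need $\int_0^t\|\cT-\cT_h\|^2_{L^2(S_j)}\,\d s\le Ch^3$, whereas \eqref{cons} only controls the global quantity by $C\int_0^t\|q-q_h\|^2_{L^2}\d s+Ch^2$ --- and that right-hand side is not yet known to be $O(h^2)$ at this stage, since Lemma \ref{Norm difference estimate} (which bounds $\|q-q_h\|_{L^2}$) itself uses \eqref{Preparation estimate 3}. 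So your route is both quantitatively insufficient and circular.

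The missing idea is that the factor $h$ in \eqref{Preparation estimate 3} does not come from any error estimate at all: the paper integrates the exact identity \eqref{dq2} in time, so that
\begin{equation*}
\int_0^t (q_j+q_{j+1})|\dot X_j-R_j|^2+(q_j+q_{j-1})|\dot X_{j-1}-R_{j-1}|^2\,\d s
=4\int_0^t \cT_j\cdot(R_j-R_{j-1})\,\d s+4\bigl(q_j(0)-q_j(t)\bigr),
\end{equation*}
and the $O(h)$ comes from the single initial edge length $q_j(0)=h_j|\partial_\xi I_hX^0|\le Ch$. The cross term is handled by the orthogonality $\cT_j\cdot\cN_j=0$, writing $\cT_j\cdot\cN_{j\pm1}=(\cT_j-\cT_{j\pm1})\cdot\cN_{j\pm1}$, applying Young's inequality, and absorbing the resulting $\eps\,|\cT_{j\pm1}-\cT_j|^2/(q_j+q_{j\pm1})$ back into the left-hand side via \eqref{Semidiscrete, another formulation}; the remaining piece $q_{j\pm1}^2/(q_j+q_{j\pm1})\le q_{j\pm1}\le Ch$ again contributes only $O(h)$. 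This is a purely structural computation on the semi-discrete ODE system, independent of the continuous solution, and it is what your proposal needs to replace the comparison-with-$\cT$ argument for \eqref{Preparation estimate 3}.
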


\begin{proof}
Firstly \eqref{Preparation estimate 1} can be easily derived by employing Assumption 2.1 and \eqref{Control assumption, consistency estimate}:
		\begin{align*}
		|R-R_j|
		&=\Big| -\frac{2\pi}{L}\cN+\frac{2\pi}{L_h}\frac{\cN_{j}q_j+\cN_{j+1}q_{j+1}}{q_j+q_{j+1}}\Big|\\
		&\le \Big|-\frac{2\pi}{L}\cN+\frac{2\pi}{L_h}\cN \Big|+\Big|\frac{2\pi}{L_h}\frac{q_j\l(\cN -\cN_j \r) }{q_j+q_{j+1}} \Big|+\Big|\frac{2\pi}{L_h}\frac{q_{j+1}\l(\cN -\cN_{j+1} \r)}{q_j+q_{j+1}} \Big|\\
		&\le C\l(|L-L_h|+|\cT-\cT_j|+ |\cT-\cT_{j+1}|\r).
	\end{align*}
%The estimate (\ref{Preparation estimate 2}) is a direct  consequence of (\ref{Control assumption, consistency estimate}) and Sobolev embedding theory.
For \eqref{Preparation estimate 3}, equation \eqref{dq2} implies
   \begin{align*}
  	&\int^t_0 (q_j+q_{j+1})|\dot X_j-R_j |^2+ (q_j+q_{j-1})|\dot X_{j-1}-R_{j-1} |^2 \d s\\
  	&=4\int^t_0\cT_j\cdot\l(R_j-R_{j-1}  \r) -\frac{\d }{\d t}q_j\  \d s\\
  	&\le 4\int^t_0 \cT_j\cdot \frac{2\pi}{L_h}\Big(-\frac{\cN_{j}q_j+\cN_{j+1}q_{j+1}}{q_j+q_{j+1}}+\frac{\cN_{j}q_j+\cN_{j-1}q_{j-1}}{q_j+q_{j-1}} \Big)\ \d s+4q_j(0)\\
  	&\le  C\int^t_0  \Big|\cT_j\cdot \frac{\cN_{j+1}q_{j+1}}{q_j+q_{j+1}}\Big| +\Big|\cT_j\cdot \frac{\cN_{j-1}q_{j-1}}{q_j+q_{j-1}}\Big| \ \d s+Ch\\
  	&\le  \eps \int^t_0\frac{|\cT_{j+1}-\cT_{j}|^2}{q_j+q_{j+1}} +\frac{|\cT_{j-1}-\cT_{j}|^2}{q_j+q_{j-1}}  \ \d s + C(\eps)\int^t_0\frac{q_{j+1}^2}{q_j+q_{j+1}} +\frac{q_{j-1}^2}{q_j+q_{j-1}} \ \d s +Ch\\
  	&\le  \eps \int^t_0\frac{|\cT_{j+1}-\cT_{j}|^2}{q_j+q_{j+1}} +\frac{|\cT_{j-1}-\cT_{j}|^2}{q_j+q_{j-1}}  \ \d s +C(\eps)h\\
  &=\frac{\eps}{4}\int^t_0 (q_j+q_{j+1})|\dot X_j-R_j |^2+ (q_j+q_{j-1})|\dot X_{j-1}-R_{j-1} |^2 \d s+C(\eps)h,
  \end{align*}
where for the second inequality we used \eqref{Control of perimeter} and   \eqref{Interpolation estimate} to get that
  \[
  \frac{2\pi}{L_h}\le C,\quad q_j(0)=h_j|\partial_\xi X^0_h |=h_j|\partial_{\xi} I_h X^0 |\le C(X)h,
  \]
for the third inequality we employed Young's inequality and the fact $\mathcal{T}_{j}\cdot \mathcal{N}_j=0$ to derive
  \begin{align*}
            \left| \mathcal{T}_j\cdot \frac{\mathcal{N}_{j+1}q_{j+1} }{q_j+q_{j+1}}\right|
                &=\left|(\mathcal{T}_j-\mathcal{T}_{j+1})\cdot \frac{\mathcal{N}_{j+1}q_{j+1} }{q_j+q_{j+1}}  \right|\\
                &\le |\mathcal{T}_j-\mathcal{T}_{j+1} |\frac{q_{j+1}}{q_j+q_{j+1}}\le \varepsilon\frac{|\mathcal{T}_j-\mathcal{T}_{j+1} |^2}{q_j+q_{j+1}}+C(\varepsilon)\frac{q_{j+1}^2}{q_j+q_{j+1}},
            \end{align*}
and  for the last equality we used \eqref{Semidiscrete, another formulation}. Obviously \eqref{Preparation estimate 3} follows by taking $\eps=1$. The estimates in (\ref{Preparation estimate 4}) can be established by using similar arguments as in \cite[Lemma 4.1]{Pozzi-Stinner} and are deleted here for brevity. \end{proof}

\smallskip

Next we present the key length difference estimate with the aid of Lemma \ref{Preparation lemma}.
\begin{lemma}\label{Norm difference estimate}
Given the assumptions of Lemma \ref{Consistency estimate lemma}, then we have
	\[
		\|q-q_h\|_{L^2}^2\leq C\int_{0}^{t} \int_{\S^{1}}\left|\p_t X-\p_t X_h\right|^{2}q_h \d \xi \d s+C\int^t_0\int_{\S^{1}}\left|\cT-\cT_h\right|^{2}q_h\d \xi\d s+Ch^{2},
	\]
	where $C$ is a  constant depending on $C_p,C_P,\kappa_1,c_0,C_0,T^*$ and $K(X)$.
\end{lemma}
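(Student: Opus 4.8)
The plan is to control $\|q-q_h\|_{L^2}^2 = \sum_j \|q - q_h\|_{L^2(I_j)}^2$ edge by edge, using the evolution equations for $q$ (continuous length element) and $q_j$ (discrete edge lengths). On each interval $I_j$ we have $q_h|_{I_j} = q_j/h_j$, so it is natural to compare $q\,h_j$ with $q_j$ and invoke the third estimate in \eqref{Preparation estimate 4} to reduce matters to bounding $\sum_j \|q h_j - q_j\|_{L^2(I_j)}^2$, or rather to track the quantity $\sum_j \frac{1}{h_j}\bigl(\int_{I_j} q\,\d\xi - q_j\bigr)^2$ which is comparable to $\|q - q_h\|_{L^2}^2$ up to the interpolation terms. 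The key point is that both $\partial_t(\int_{I_j} q\,\d\xi)$ and $\dot q_j$ have explicit expressions: integrating \eqref{Length element equation} over $I_j$ gives $\partial_t \int_{I_j} q\,\d\xi = \int_{I_j}\bigl(-q|\partial_t X|^2 + \partial_t X\cdot R\, q\bigr)\d\xi$, while \eqref{dq2} gives $\dot q_j = -\tfrac{q_j+q_{j+1}}{4}|\dot X_j - R_j|^2 - \tfrac{q_j+q_{j-1}}{4}|\dot X_{j-1}-R_{j-1}|^2 + \cT_j\cdot(R_j - R_{j-1})$.

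First I would set $e_j(t) := \int_{I_j} q\,\d\xi - q_j(t)$ and compute $\frac{\d}{\d t}\tfrac12 \sum_j \tfrac{1}{h_j} e_j^2 = \sum_j \tfrac{1}{h_j} e_j \dot e_j$, then substitute the two evolution laws above. The negative quadratic terms $-q|\partial_t X|^2$ and $-\tfrac{q_j+q_{j+1}}{4}|\dot X_j - R_j|^2$ should largely cancel against each other up to consistency errors: the continuous term is $\int_{I_j} q|\partial_t X - R|^2 - \ldots$ after completing the square, and the discrete one is already in "$|\dot X_j - R_j|^2$" form, so the difference is governed by $\|\partial_t X - \dot X_j\|^2$, $\|\cT - \cT_j\|^2$ (from $R - R_j$ via \eqref{Preparation estimate 1}), and interpolation remainders — exactly the quantities estimated in \eqref{Preparation estimate 4} and \eqref{Preparation estimate 1}. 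The residual first-order terms $\partial_t X\cdot R\,q$ versus $\cT_j\cdot(R_j - R_{j-1})$ need to be matched against the telescoping structure; here one uses $R = -\tfrac{2\pi}{L}\cN$, the identity $\cT_j\cdot(R_j - R_{j-1})$ unwound via \eqref{rrj}, and the fact that $\sum_j$ of the telescoped pieces combines the neighboring edges. After taking absolute values and applying Young's inequality with a small parameter $\varepsilon$, every term is bounded either by $\varepsilon\int q_h|\partial_t X - \partial_t X_h|^2$, or by $C\int q_h|\cT - \cT_h|^2$, or by $Ch^2\|X\|_{H^2}^2$-type interpolation contributions, or by $C\sum_j\tfrac{1}{h_j}e_j^2$ itself.

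The plan then is to integrate in time from $0$ to $t$, use that $e_j(0)$ is $O(h^{3/2})$ in the relevant norm (since $X_h(\cdot,0) = I_h X^0$, so $\int_{I_j} q(0)\,\d\xi - q_j(0)$ is an interpolation error of one degree higher), apply \eqref{Preparation estimate 3} to absorb any stray $\int_0^t (q_j + q_{j+1})|\dot X_j - R_j|^2\,\d s \le Ch$ terms that survive the cancellation (these contribute at worst $O(h)$, hence $O(h^2)$ after being paired with another small factor or simply folded into $Ch^2$ since a single power mismatch is handled by $\varepsilon$-absorption against the already-small error quantities), and finally use Gronwall's inequality in the summed quantity $\sum_j\tfrac{1}{h_j}e_j^2$. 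Choosing $\varepsilon$ small at the end moves the $\varepsilon\int_0^t\int q_h|\partial_t X - \partial_t X_h|^2$ term to coefficient form on the right-hand side as stated in the lemma.

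The main obstacle I anticipate is the bookkeeping in the first-order (non-quadratic) terms: showing that $\sum_j \tfrac{1}{h_j} e_j\bigl(\int_{I_j} q\,\partial_t X\cdot R\,\d\xi - h_j\,\cT_j\cdot(R_j - R_{j-1})\bigr)$ is controlled requires rewriting $\cT_j\cdot(R_j - R_{j-1})$ so that it looks like a discretization of $\int_{I_j}\partial_t X\cdot R\,q$, and the discrepancy between $R_j, R_{j-1}$ (averages over adjacent edge pairs) and the pointwise $R$ must be absorbed using \eqref{Preparation estimate 1} together with the summability of $\|\cT - \cT_k\|_{L^2(I_j)}^2$ over $j$. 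One has to be careful that the factor $\tfrac{1}{h_j}$ does not spoil the power counting; this is where the regularity Assumption 2.1 (quasi-uniformity, $\min_j h_j \ge C_p h$) and the bounds \eqref{Preparation estimate 4} — which carry the correct powers of $h$ — are essential. Apart from this matching argument, the rest is routine Cauchy–Schwarz, Young, and Gronwall.
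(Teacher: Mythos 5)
Your plan is essentially the paper's proof: compare $h_j q$ with $q_j$ element by element, subtract the two evolution laws \eqref{Length element equation} and \eqref{dq2}, split the result into quadratic parts (which nearly cancel) and first-order parts (which are matched using \eqref{Semidiscrete, another formulation} and controlled via \eqref{Preparation estimate 1}), invoke \eqref{Preparation estimate 3} and \eqref{Preparation estimate 4}, handle the $O(h^{3/2})$ initial discrepancy from interpolation, and close with Gronwall. The only structural difference is bookkeeping: the paper integrates $\frac{\d}{\d t}(h_jq-q_j)$ in time first, bounds $|h_jq-q_j|(t)$ pointwise, and only then squares, multiplies by $h_j$, and sums over $j$, whereas you differentiate the weighted sum of squares $\sum_j e_j^2/h_j$ directly.

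That ordering is not cosmetic, and it is the one place where your plan, as written, would break. You suggest that the stray $\int_0^t(q_j+q_{j+1})|\dot X_j-R_j|^2\,\d s\le Ch$ contributions can be ``simply folded into $Ch^2$''; they cannot --- an additive $O(h)$ term on the right-hand side destroys the claimed $O(h^2)$ bound for $\|q-q_h\|_{L^2}^2$. The estimate \eqref{Preparation estimate 3} must enter multiplicatively: factor the difference of squares as $\bigl((\p_t X-R)+(\dot X_j-R_j)\bigr)\cdot\bigl((\p_t X-R)-(\dot X_j-R_j)\bigr)$ and apply Cauchy--Schwarz in time against the measure $(q_j+q_{j+1})|\dot X_j-R_j|^2\,\d s$ of total mass $O(h)$, which yields $Ch\bigl(\int_0^t|(\p_t X-R)-(\dot X_j-R_j)|^2\,\d s\bigr)^{1/2}$ and hence, after squaring, $Ch^2$ times a good quantity. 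In your differential form the same term appears as $\frac{e_j}{h_j}\cdot\frac{q_j+q_{j+1}}{4}(\cdots)$; a pointwise-in-time Young inequality either loses a power of $h$ on the companion factor (since $\int_0^t|\dot X_j-R_j|^2\,\d s$ is only $O(1)$ after dividing by the edge length) or produces $\sum_j\sup_s e_j^2/h_j$, which does not absorb into $\sup_s\sum_j e_j^2/h_j$. The fix is to pull $\sup_{s\le t}|e_j|/h_j$ out of the time integral for each fixed $j$ before summing --- which is exactly the paper's integrate-first-then-square order. With that adjustment the rest of your outline (the matching of $\cT_j\cdot(R_j-R_{j-1})$ against $\int_{I_j}q\,\p_t X\cdot R$, the use of Assumption 2.1 and \eqref{Preparation estimate 4} to keep the powers of $h$ straight) is accurate.
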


\smallskip
\begin{proof}
%The proof is of  same streamline in \cite[Lemma 4.4]{Pozzi-Stinner} and we only focus on the estimates involving $L$ and $L_h$.
By definition, one has
\[\int_{\S^{1}}\l(q(\xi, t)-q_h(\xi, t)\r)^{2} \mathrm{~d} \xi=\sum\limits_{j=1}^N\int_{I_j}(q(\xi,t)-q_j(t)/h_j)^2d\xi.\]
	By integration for $\frac{\d }{\d t}\l(h_jq-q_j \r)$, we can write
\[
	(h_jq-q_j)(t)=(h_jq-q_j)(0)+\int^t_0 \l(h_j\p_tq-\dot{q}_j \r)\ \d s=:P +\int^t_0 A\ \d s,
\]
where by the interpolation error estimate \eqref{Interpolation estimate} and inverse estimate \eqref{Inverse estimate}, $P$ satisfies
\[|P|=h_j\big||\p_\xi X^0|-|\p_\xi I_h X^0|\big|_{I_j}\le Ch\|\p_\xi(X^0-I_hX^0)\|_{L^\infty(I_j)}\le Ch^{3/2}\|X^0\|_{H^2(I_j)}.\]
%\begin{align*}
%&\le Ch\l(\|\p_\xi X^0-I_h \p_\xi X^0\|_{L^\infty(I_j)}+
%\|I_h \p_\xi X^0- \p_\xi I_h X^0\|_{L^\infty(I_j)}\r)\\
%&\le Ch^{3/2}\|X^0\|_{H^2(I_j)}+Ch^{1/2}\|I_h \p_\xi X^0- \p_\xi I_h X^0\|_{L^2(I_j)}\\
%&\le Ch^{3/2}\|X^0\|_{H^2(I_j)}+Ch^{1/2}\l(\|I_h \p_\xi X^0- \p_\xi X^0\|_{L^2(I_j)}+\| \p_\xi (X^0-I_h X^0)\|_{L^2(I_j)}\r)
%\end{align*}
Applying \eqref{Length element equation} and \eqref{dq2}, on each grid element $I_j=[\xi_{j-1}, \xi_j]$, we can write $A$ as
\begin{align*}
	A
	= \frac{\d }{\d t}\l(h_jq-q_j \r)&=-\Big(\frac{h_jq}{2}|\p_t X-R|^2-\frac{q_j+q_{j+1}}{4}|\dot X_j-R_j|^2 \Big)\\
		&\quad -\Big(\frac{h_jq}{2}|\p_t X-R|^2-\frac{q_j+q_{j-1}}{4}|\dot X_{j-1}-R_{j-1}|^2 \Big)\\
		&\quad -\l(h_j q(\p_t X-R )\cdot R+\cT_j\cdot\l(R_j-R_{j-1}  \r) \r)\triangleq:-A^+-A^- -\hat A.
\end{align*}
The terms $\int^t_0 |A^+|\ \d s$ and $\int^t_0 |A^-|\ \d s$ are estimated in  \cite[Lemma 4.4]{Pozzi-Stinner}, which read as
\begin{align*}
&\int^t_0 (|A^+|+|A^-|)\d s\le CQ_j
	+C h\Big(\int^t_0 \sum\limits_{k=j-1}^j|\p_t X-R-(\dot X_k -R_k)|^2 \d s \Big)^{1/2},\\
&Q_j:=\Big(\int^t_0 \l(|h_jq-q_j|^2+|h_jq-q_{j+1}|^2+|h_jq-q_{j-1}|^2 \r) \d s \Big)^{1/2},\end{align*}
where \eqref{Preparation estimate 3} has been used. Applying \eqref{Preparation estimate 1}, we immediately get
\begin{align*}
&\int^t_0 (|A^+|+|A^-|)\d s	\le CQ_j+ChT_j+ChY_j+Ch\Big(\int^t_0 |L-L_h |^2\ \d s \Big)^{1/2},\\
&T_j:=\Big(\int^t_0 \sum\limits_{k=j-1}^{j+1}|\cT-\cT_k |^2 \d s \Big)^{1/2},\quad Y_j:=\Big(\int^t_0 |\p_t X-\dot X_j |^2+|\p_t X-\dot X_{j-1} |^2\d s \Big)^{1/2}.
\end{align*}
It remains to estimate $\int^t_0 |\hat A|\ \d s$. By definition, one has
\begin{align*}
		\hat A
		%&=h_j q(\p_t X-R )\cdot \Big(-\frac{2\pi}{L}\cN \Big)+\cT_j\cdot \frac{2\pi}{L_h}\Big(-\frac{\cN_{j}q_j+\cN_{j+1}q_{j+1}}{q_j+q_{j+1}}+\frac{\cN_{j}q_j+\cN_{j-1}q_{j-1}}{q_j+q_{j-1}} \Big)\\
		&=\frac{h_j q}{2}(\p_t X-R )\cdot \Big(-\frac{2\pi}{L}\cN \Big)- \frac{2\pi}{L_h}\frac{q_{j+1}}{q_j+q_{j+1}}\cT_j\cdot\cN_{j+1}\\
		&\quad +\frac{h_j q}{2}(\p_t X-R )\cdot \Big(-\frac{2\pi}{L}\cN \Big)+ \frac{2\pi}{L_h}\frac{q_{j-1}}{q_j+q_{j-1}}\cT_j\cdot\cN_{j-1}\triangleq:\hat A_1 +\hat A_2.
	\end{align*}
Recalling \eqref{Semidiscrete, another formulation}, we observe
\[
		\cT_j\cdot\cN_{j+1}=\cT_j\cdot \l(\cT_{j+1}-\cT_j \r)^\perp=-\cN_j\cdot \l(\cT_{j+1}-\cT_j \r)=-\frac{q_j+q_{j+1}}{2}\cN_j\cdot  \big(\dot X_j-R_j \big),\]
which implies
	\begin{align*}
		\hat A_1
		%&=\frac{h_j q}{2}(\p_t X-R )\cdot \Big(-\frac{2\pi}{L}\cN \Big)+\frac{\pi}{L_h} q_{j+1}\cN_j\cdot   \big(\dot X_j-R_j \big)\\
		&=h_j q\l(\p_t X-R \r)\cdot \Big(-\frac{\pi}{L}\cN+\frac{\pi}{L_h}\cN_j \Big)\\
		&\quad +(q_{j+1}-h_j q)\l(\p_t X-R  \r)\cdot \frac{\pi}{L_h}\cN_j +\big(\big(\dot X_j-R_j \big)-\l(\p_t X-R  \r) \big)\cdot \frac{\pi}{L_h} q_{j+1}\cN_j.
	\end{align*}
	Therefore, by the assumptions and \eqref{Preparation estimate 1}, we can estimate
\begin{align*}
	&\int^t_0 |\hat A_1| \d s
	\le Ch\int^t_0 \Big|\frac{\pi \cN}{L}-\frac{\pi\cN_j }{L_h} \Big|+|\dot X_j-R_j-\p_t X+R|\d s+C\int^t_0|q_{j+1}-h_j q|\d s\\
	&\le Ch\int^t_0|L-L_h|+\sum\limits_{k=j}^{j+1}|\cT-\cT_k| \d s+C\int^t_0|q_{j+1}-h_j q|\d s +Ch\int^t_0 |\dot X_j-\p_t X|\d s\quad \\
	&\le Ch\Big(\int^t_0 |L-L_h |^2\ \d s \Big)^{1/2}+CQ_j +ChT_j +ChY_j,
\end{align*}
and similar estimates can be established for $\int^t_0 |\hat A_2|\ \d s$.

To summarize, we obtain the following estimate on $I_j$
\[
|h_jq-q_j|
	\le  Ch^{3/2}\|X^0\|_{H^2(I_j)} + Ch\Big(\int^t_0\| q-q_h\|_{L^2}^2\d s \Big)^{1/2}+CQ_j +ChT_j +ChY_j,\]
where we have used \eqref{Estimate of perimeter}.
Applying   \eqref{Preparation estimate 4}, we get
\begin{align*}
	&\|h_jq(t)-q_j(t)\|_{L^2(I_j)}^2
	\le Ch^4\Big(\|X^0\|_{H^2(I_j)}^2+\int^t_0\|\p_t X\|^2_{H^1(I_j)}+\| X\|^2_{H^2(S_j)}\d s\Big)\\
	& \,+ Ch^2\int^t_0h\|q-q_h\|_{L^2}^2+\|q-q_h\|_{L^2(S_j)}^2+\|\cT-\cT_h\|_{L^2(S_j)}^2+
\|\p_t X-\p_t X_h\|_{L^2(S_j)}^2\d s.
\end{align*}
Summing up over all grid elements $I_j$ yields 
\[Ch^2\|q-q_h\|_{L^2}^2
	\le C h^4+Ch^2\int^t_0\| q-q_h\|_{L^2}^2 +\|\cT-\cT_h\|_{L^2}^2+\|\p_t X-\p_t X_h\|_{L^2}^2\d s,
\]
where we  have used the inequality
\[\|h_jq(t)-q_j(t)\|_{L^2(I_j)}^2= h_j^2\|q-q_h\|_{L^2(I_j)}^2 \ge Ch^2\|q-q_h\|_{L^2(I_j)}^2.\]
Finally, the desired estimate is concluded by a Gronwall's argument.
\end{proof}

We are now in a position to prove Theorem \ref{Error estimate}.

\smallskip

\emph{Proof of Theorem \ref{Error estimate}.}
Since the nonlinear terms in \eqref{Semidiscrete, lumped mass} are locally Lipschitz with respect to $X_j$, the local existence and uniqueness is guaranteed  by standard ODE theory. Let $T^*\in (0,T)$ be the maximal time such that the semi-discrete solution $X_h$ exists and the following estimates hold
 	\be\label{Xhb}
  	\inf\l|\p_{\xi} X_{h}\r|\ge \kappa_1/2,\quad \text{and}\quad \sup\l|\p_{\xi} X_{h}\r|\le 2\kappa_2 ,\quad \forall\ t\in [0,T^*].
  \ee
 Combining Lemma \ref{Consistency estimate lemma}, Lemma \ref{Norm difference estimate} and employing Gronwall's argument, we can yield that for any $t\in [0,T^*]$, it holds
  \be\label{mpf1}
  \int_{0}^{t} \int_{\S^{1}}\left|\p_t X-\p_t X_h\right|^{2}q_h \d \xi \d s+\sup_{0\le s\le t} \int_{\S^{1}}\left|\cT-\cT_h\right|^{2}q_h\d \xi \le Ch^2.
  \ee
Plugging this back into Lemma \ref{Norm difference estimate}, we obtain for any $t\in [0,T^*]$,
  \be\label{mpf2}
   \|q-q_h\|_{L^2}^2\leq Ch^{2}.
  \ee
By Lemma \ref{Control lemma}, there exists $h_0>0$ depending  on  $C_p, C_P, \kappa_1, \kappa_2,T$ and $K(X)$ such that for any $0<h\le h_0$, we have
  \[
\inf|\partial_{\xi} X_{h}|\ge 3\kappa_1/4,\quad \text{and}\quad \sup|\partial_{\xi} X_{h}|\le 3\kappa_2/2,\quad \text{at }t=T^*.
  \]
  By standard ODE theory, we can uniquely  extend the above semi-discrete solution in a neighborhood of $T^*$. And  by continuity, we obtain
   \[
   \inf|\partial_{\xi} X_{h}|\ge \kappa_1/2,\quad \text{and}\quad \sup|\partial_{\xi} X_{h}|\le 2\kappa_2,\quad \text{in a neighborhood of } T^*.
  \]
This contradicts to the maximality of $T^*$, and thus $T^*=T$. Thus \eqref{Xhb}-\eqref{mpf2} hold for $t\in [0, T]$ and the nondegeneration property \eqref{Nondegenerate of edge length}  follows by \eqref{Xhb} and Assumption 2.1 by noticing $q_j=h_jq_h$. We derive the error estimate by integration and \eqref{Basic equality}:
\begin{align*}
&\quad\|X(\cdot, t)-X_h(\cdot, t)\|_{H^1}^2=
\int_{\S^1}|X-X_h|^2\d\xi+\int_{\S^1}|\p_\xi X-\p_\xi X_h|^2\d\xi\\
&\le 2\int_{\S^1}\big(\int_0^t \p_t X-\p_t X_h\d s\big)^2\d\xi+2\|X^0-I_h X^0\|_{L^2}^2+\|q-q_h\|_{L^2}^2+\int_{\S^1}|\mathcal{T}-\mathcal{T}_h|^2qq_h \d\xi\\
&\le 2\int_{\S^1} T\int_0^t |\p_t X-\p_t X_h|^2\d s\d\xi+Ch^2
\le Ch^2,
\end{align*}
and the proof is completed.
$\square\hfill$

\section{Numerical results}
\label{sec:illust}

In this section we present a fully discrete version of \eqref{Semidiscrete, weak} to simulate the AP-CSF. Choose an integer $m$, set the time step $\tau=T/m$ and $t_k=k\tau$, $k=0,\ldots,m$. For simplicity we choose a uniform mesh, i.e., $\xi_j=jh$ for $j=0, \ldots, N$ and $h=2\pi/N$. We take $X_h^0=I_h X^0$. For $k\ge 1$, find  $X_h^k=\sum\limits_{j=1}^NX^k_j\varphi_j\in V_h$ by
		\begin{align*}
			&\int_{\S^1}\left|\partial_{\xi} X_{h}^{k-1}\right| \delta_{\tau} X_{h}^{k} \cdot v_{h} \d \xi+\int_{\S^1} \partial_{\xi} X_{h}^{k} \cdot \partial_{\xi} v_{h}/\left|\partial_{\xi} X_{h}^{k-1}\right| \d \xi\\
			&+\int_{\S^1}h^2|\p_\xi X^{k-1}_h|\p_\xi\delta_\tau X^k_h\cdot \p_\xi v_h/6\ \d \xi+\int_{\S^1}2\pi (\p_\xi X_h^k)^\perp\cdot v_h/L_h^{k-1} \d \xi=0, \quad \forall\ v_{h} \in V_h,
		\end{align*}		
where $\delta_\tau$ is the backward finite difference $\delta_\tau X_h^m=(X_h^m-X_h^{m-1})/\tau$, and $L_h^{k-1}$ is the length of the image of $X_h^{k-1}$. Or it can be written equivalently as a discretization for the ODE system \eqref{Semidiscrete, lumped mass}:
\[	\frac{q_j^{k-1}+q_{j+1}^{k-1}}{2\tau}(X_j^k-X_j^{k-1})-\frac{X^k_{j+1}-
X^k_{j}}{q^{k-1}_{j+1}}+\frac{X^k_{j}-X^k_{j-1}}{q^{k-1}_{j}}+
\frac{\pi}{L_h^{k-1}}\l(X^k_{j+1}-X^k_{j-1} \r)^\perp=0.
			\]

First, we test the convergence rates in the $L^2$ norm, $H^1$ seminorm and the error of velocity, respectively. Since the exact solution of the AP-CSF \eqref{AP-CSF} is unknown, we consider the following numerical errors
\begin{align*}
                \left(\mathcal{E}_1\right)_{h,\tau}(T)
                &:= \max_{1\le k\le T/\tau} \big\|X^k_{h,\tau}-X_{h/2,\tau/4}^{4k}\big\|_{L^2(\mathbb{S}^1)},\\
                \left(\mathcal{E}_2\right)_{h,\tau}(T)
                &:= \max_{1\le k\le T/\tau} \big\|\partial_\xi X^k_{h,\tau}-\partial_\xi X_{h/2,\tau/4}^{4k}\big\|_{L^2(\mathbb{S}^1)},\\
                \left(\mathcal{E}_3\right)_{h,\tau}(T)
                &:=\Big(\sum_{k=0 }^{T/\tau-1}\tau \Big\|\frac{X^{k+1}_{h,\tau}-X^{k}_{h,\tau}}{\tau}
-\frac{X^{4k+1}_{h/2,\tau/4}-X^{4k}_{h/2,\tau/4}}{\tau/4}\Big\|_{L^2(\mathbb{S}^1)}^2\Big)^{1/2},
            \end{align*}
where $X_{h,\tau}^k$ represents the solution obtained by the above fully discrete scheme with mesh size $h$ and time step $\tau$. The corresponding convergence order is defined as:
            \[
        \text{Order}_i=\log\Big(\frac{\left(\mathcal{E}_i\right)_{h,\tau}(T)}{\left(\mathcal{E}_i\right)_{h/2,\tau/4}(T)} \Big)\Big/ \log 2,\quad i=1,2,3.
            \]

 The errors and convergence orders are displayed in Table \ref{tab:convergence order}, where we choose $h=2\pi/N$ and $\tau=0.5h^2$ and the initial value is given by $X^0(\xi)=(2\cos \xi, \sin \xi)$. The results indicate that the numerical solution converges linearly in space in the $H^1$ seminorm, which agrees with the theoretical analysis in Theorem \ref{Error estimate}. We can also observe that the solution and the velocity converge quadratically in $L_t^\infty L_x^2$ and $L_t^2L_x^2$, respectively, which is superior than the result in Theorem \ref{Error estimate}.
\begin{table}[h!]
\def\temptablewidth{1\textwidth}
\vspace{-12pt}
\caption{Numerical errors up to $T=1/4$.}\label{tab:convergence order}
{\rule{\temptablewidth}{1pt}}
\begin{tabular*}{\temptablewidth}{@{\extracolsep{\fill}}lllllll}
\quad\multirow{2}{1cm}{$N$} &  \multirow{2}{1cm}{$\l(\cE_1\r)_{h,\tau}(1/4)$}  & \multirow{2}{1cm}{$\mathrm{Order}_1$} &  \multirow{2}{1cm}{$\l(\cE_2\r)_{h,\tau}(1/4)$}  & \multirow{2}{1cm}{$\mathrm{Order}_2$} &  \multirow{2}{1cm}{$\l(\cE_3\r)_{h,\tau}(1/4)$}  & \multirow{2}{1cm}{$\mathrm{Order}_3$}  \\ \\ \hline
   \quad16 &  2.08E-2   & -     &  1.15E-0   & -    &  3.09E-2   & - \\
   \quad32 &   5.42E-3  & 1.94  &   6.01E-1  & 1.94 &   1.01E-2  & 1.61\\
    \quad64 &  1.37E-3  & 1.99  &  3.03E-1  & 0.99  &  2.76E-3    & 1.87  \\
   \quad128 &   3.42E-4 & 2.00  &   1.52E-1 & 1.00  &   7.09E-4 & 1.96  \\
  \end{tabular*}
{\rule{\temptablewidth}{1pt}}
%\medskip
\end{table}

%\begin{table}[htbp!]
%\def\temptablewidth{1\textwidth}
%\vspace{-12pt}
%\caption{Numerical errors up to $T=1/4$.}\label{tab:convergence order}
%{\rule{\temptablewidth}{1pt}}
%\begin{tabular*}{\temptablewidth}{@{\extracolsep{\fill}}lllll}
%\multirow{2}{2cm}{$N$} & \multirow{2}{2cm}{$\l(\cE_1\r)_{h,\tau}(1/4)$} & \multirow{2}{2cm}{\text{Order}}  & \multirow{2}{2cm}{$\l(\cE_2\r)_{h,\tau}(1/4)$}  & \multirow{2}{2cm}{\text{Order}}   \\ \\ \hline
%   8  & 8.68E-1  & - & 9.67E-1 &  -\\
%   16 & 4.37E-1   & 0.99 & 5.29E-1  & 0.87  \\
%   32 &  2.19E-1  &  1.00  & 2.97E-1  & 0.83  \\
%    64 &  1.10E-1  & 1.00 & 1.67E-1  & 0.83  \\
%   128 &  5.48E-2 &  1.00 & 9.17E-2 & 0.87  \\
%    256 & 2.74E-2  & 1.00 & 4.84E-2 & 0.92   \\
% \end{tabular*}
%{\rule{\temptablewidth}{1pt}}
%\medskip
%\end{table}

\begin{figure}[h!]
\hspace{-10mm}
\begin{minipage}[c]{0.8\linewidth}
\centering
\includegraphics[width=14cm,height=4.5cm]{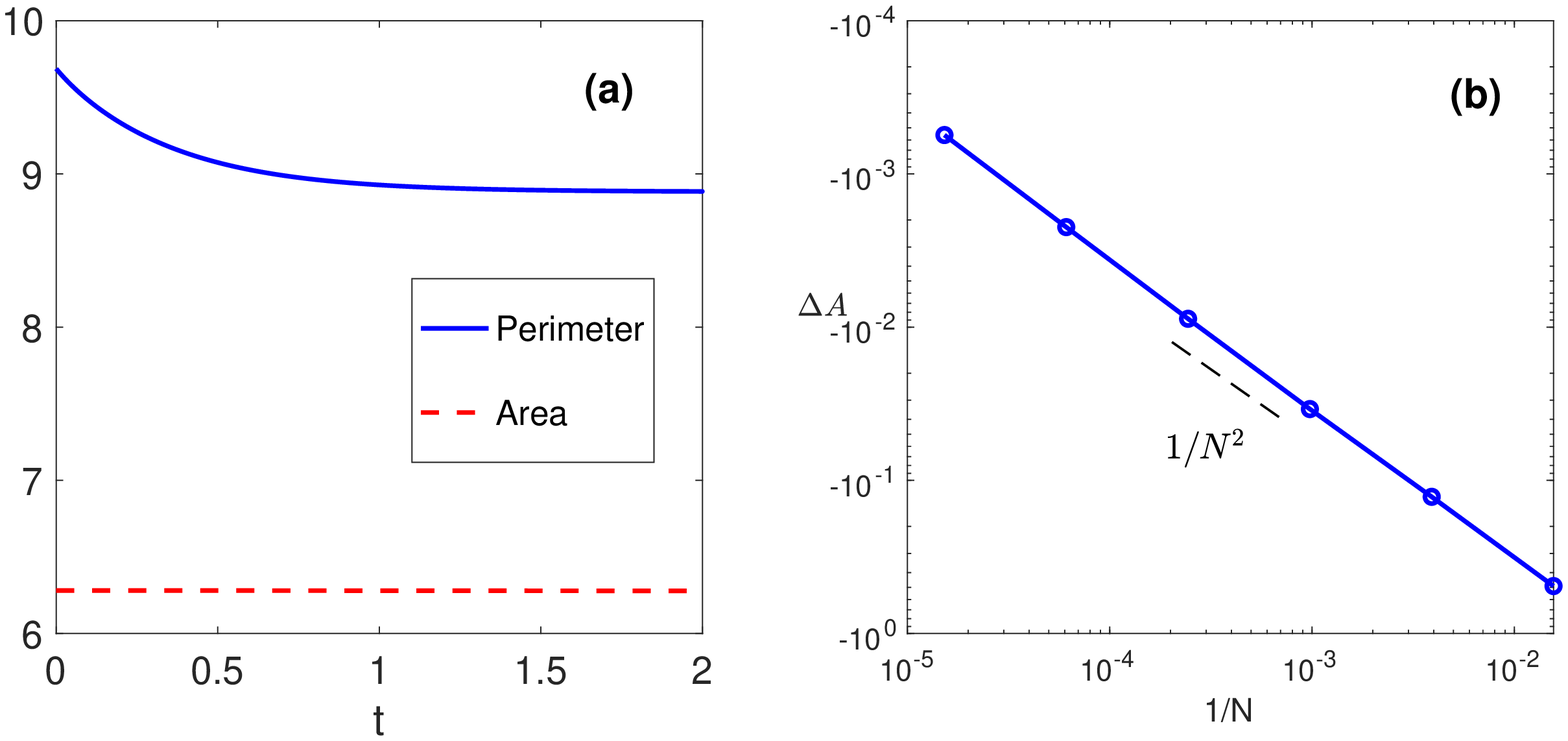}
\end{minipage}
		\caption{Numerical results for an initial ellipse curve (i.e., $\frac{x^2}{4}+y^2=1$): (a) evolution of the perimeter and area; (b) the asymptotic area loss at $T=1/4$.}
		\label{The preservation of geometric structures}
	\end{figure}

\begin{figure}[h!]
\vspace{-5mm}
%\begin{minipage}[c]{0.7\linewidth}
\centering
	\includegraphics[width=12cm,height=4.0cm]{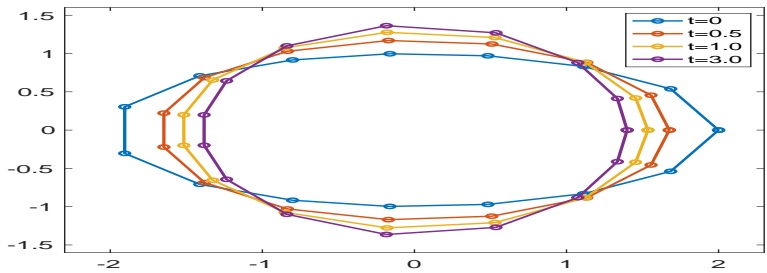}
%\end{minipage}
		\caption{The evolution of an initial convex polygon under the AP-CSF.}
		\label{The evolution of convex polygon}
	\end{figure}

Finally, we check the structure-preserving properties of our algorithm. As is shown in Fig.~\ref{The preservation of geometric structures}(a), the length of the curve is decreasing during the evolution, which confirms the theoretical analysis in Theorem \ref{Perimeter decreasing property}. Furthermore, Fig.~\ref{The preservation of geometric structures}(a) shows the area is almost preserving and more specifically, Fig.~\ref{The preservation of geometric structures}(b) indicates that the area enclosed by the curve has an error at $O(h^2)$. The evolution of the polygon with the number of grid points $N=15$, which approximates the evolution of the ellipse determined by $x^2/4+y^2=1$, is shown in Fig.~\ref{The evolution of convex polygon}, from which we clearly see that the polygon keeps convex during the evolution, which verifies the convexity-preserving property in Theorem \ref{Preservation of convexity}.

%Our algorithm  can also apply to non-convex initial data, for instance, the evolution of an initial flower shape is shown in Fig. \ref{The evolution of flower}, which gradually collapses to a circle. This example shows that our scheme also works well for non-convex curves.
%\begin{figure}[htbp]
%		\includegraphics[width=14cm]{Figure2.eps}
%		\caption{The evolution of a flower shape: (a) t=0, (b) t=0.10, (c) t=0.15, (d) t=0.30, (e) t=0.50, (f) t=1.00.}
%		\label{The evolution of flower}
%	\end{figure}

\smallskip

%=============================================================================
%                                                                          REFERENCES
% =============================================================================

%\bibliographystyle{siam}
%
%
%\bibliography{biblio}

\end{document}